\pgfplotsset{compat=newest}
\pgfplotsset{plot coordinates/math parser=false}
\newlength\figureheight
\newlength\figurewidth 
\newtheorem{thm}{Theorem}[section]
\newtheorem{lem}[thm]{Lemma}
\theoremstyle{definition}
\newtheorem*{cexmp*}{Counterexample}
\newtheorem{rem}[thm]{Remark}
\newcommand \be {\begin{equation}}
\newcommand \ee {\end{equation}}
\newcommand \ben {\begin{equation*}}
\newcommand \een {\end{equation*}}
\newcommand \bea {\begin{eqnarray}}
\newcommand \eea {\end{eqnarray}}
\newcommand \bean {\begin{eqnarray*}}
\newcommand \eean {\end{eqnarray*}}
\newcommand \RR {\mathbb{R}}
\newcommand \EE {\mathbb{E}}
 \DeclareMathOperator{\diag}{Diag}
\DeclareMathOperator{\conv}{conv} 
\DeclareMathOperator{\rg}{rg}
\DeclareMathOperator{\proj}{proj}
\DeclareMathOperator{\trace}{trace}
\newcommand{\scp}[2]{\langle #1 \,,\, #2\rangle}
\newcommand{\norm}[2][]{\|#2\|_{#1}}
\begin{document}
\title{The Randomized Kaczmarz Method with Mismatched Adjoint}

\author{Dirk~A. Lorenz\thanks{Institute for Analysis and Algebra, TU Braunschweig, 38092 Braunschweig, Germany, \texttt{d.lorenz@tu-braunschweig.de}} \and Sean~Rose\thanks{Department of Radiology, University of Chicago, 5841 S. Maryland Avenue MC2026, Chicago IL, 60637, \texttt{seanrose949@gmail.com}}\and Frank~Sch\"{o}pfer\thanks{Institut f\"ur Mathematik, Carl von Ossietzky Universit\"at Oldenburg, 26111 Oldenburg, Germany, \texttt{frank.schoepfer@uni-oldenburg.de}}}

\maketitle

\begin{abstract}
  This paper investigates the randomized version of the Kaczmarz method to solve linear systems in the case where the adjoint of the system matrix is not exact---a situation we refer to as ``mismatched adjoint''.
  We show that the method may still converge both in the over- and underdetermined consistent case under appropriate conditions, and we calculate the expected asymptotic rate of linear convergence.
  Moreover, we analyze the inconsistent case and obtain results for the method with mismatched adjoint as for the standard method.
  Finally, we derive a method to compute optimized probabilities for the choice of the rows and illustrate our findings with numerical example.
\end{abstract}

\noindent
\textbf{Keywords:} randomized algorithms, Kaczmarz method, linear convergence

\noindent
\textbf{AMS classification:} 65F10, 68W20, 15A24

\section{Introduction}
\label{sec:intro}

In this paper we consider the solution of linear systems
\begin{equation}
  Ax=b \label{eq:OCS} 
\end{equation}
with row-action methods, i.e methods that only use single rows of the system in each step.
This is beneficial, for example, in situations where the full system is too large to store or keep in memory.
Probably the first method of this type is the Kaczmarz method where each step consists of a projection onto a hyperplane given by the solution space of a single row.
If $a^{T}$ is a row vector of the system and the corresponding entry on the right hand side is (with slight abuse of notation) $b$, then the orthogonal projection of a given vector $x$ onto the solution space of $\scp{a}{x} = b$ is
\[
x - \frac{\scp{a}{x} - b}{\norm{a}^{2}}\cdot a.
\]
Thus, one updates the current vector $x$ in the direction of $a$ which is the corresponding column of $A^{T}$.
A question that has been motivated by the use of the Kaczmarz method in tomographic reconstruction (where it is known under the name \emph{algebraic reconstruction technique} (ART),~\cite{gordon1970algebraic}, see also~\cite{kak2002principles}) is:
\emph{Will the method still converge, if we do not use $A^{T}$ as the adjoint but a different matrix $V^{T}$?}
In tomographic reconstruction, the linear operator $A$ models the ``forward projection'' operation, which maps an object's density to a set of measured line integrals.
The adjoint map $A^{T}$, however, also has a physical interpretation: This map is called ``backprojection'' and, roughly speaking, ``distributes the values along lines through the measurement volume''.
Since both $A$ and $A^{T}$ have their own physical significance, their corresponding maps are often implemented by different means.
For example,~\cite{de2004distance} proposes and discusses several method for the implementation of the backprojection method and shows that special methods compare favorably with respect to reconstruction quality.
In~\cite{zeng2000unmatched}, the authors discuss the use of mismatched projection pairs, for the purposes of improved computational efficiency when using the Landweber algorithm for reconstruction.
Hence, one does not always use the actual adjoint, but a different map and we refer to this situation as using a ``mismatched adjoint''.

The goal of this paper is to analyze the convergence behavior of the randomized Kaczmarz method with mismatched adjoint.

\section{The overdetermined consistent case}

The Kaczmarz method is known to converge for any consistent linear system, but the speed of convergence is hard to quantify since it depends on the ordering of the rows.
This is notably different for the \emph{randomized Kaczmarz method} as shown in~\cite{strohmer2009randomized}: If the rows are chosen independently at random the method converges linearly.
To fix notation, let $A=(a_i^T)_{i=1,\ldots,m} \in \RR^{m \times n}$ with $m \ge n$ and row vectors
$a_i \in \RR^n$ and $V=(v_i^T)_{i=1,\ldots,m}$, with row vectors
$v_i \in \RR^n$.  Moreover let  $p_i>0$,
$i\in \{1,\ldots,m\}$ denote a probability distribution on the set of indices of the rows, i.e., $p_{i}$ is the probability to choose the $i$-th row for the next step.

\begin{algorithm}[h]
  \caption{Randomized Kaczmarz with Mismatched Adjoint}
  \label{alg:RKMA}
  \begin{algorithmic}[1]
    \REQUIRE{starting point $x_0\in\RR^n$ and probabilities $p_i>0$, $i \in \{1,\ldots,m\}$}
    \ENSURE{solution of~\eqref{eq:OCS}}
    \STATE initialize $k = 0$
    \REPEAT
    \STATE choose an index $i_k=i \in \{1,\ldots,m\}$ at random with probability $p_i$
    \STATE update $x_{k+1}=x_k-\tfrac{\scp{a_{i_k}}{x_k}-b_{i_k}}{\scp{a_{i_k}}{v_{i_k}}} \cdot v_{i_k}$  
    \STATE increment $k =  k+1$
    \UNTIL{a stopping criterion is satisfied}
  \end{algorithmic}
\end{algorithm}

The algorithm we consider in this work is the randomized Kaczmarz method with mismatched adjoint, abbreviated RKMA, and is given in Algorithm~\ref{alg:RKMA}.
The difference to the standard randomized Kaczmarz method is that the usual projection step 
$x_{k+1} =
x_{k}-\tfrac{\scp{a_{i_k}}{x_k}-b_{i_k}}{\norm{a_{i_{k}}}^{2}} \cdot
a_{i_k}$
is replaced by
$x_{k+1} =
x_{k}-\tfrac{\scp{a_{i_k}}{x_k}-b_{i_k}}{\scp{a_{i_k}}{v_{i_k}}} \cdot
v_{i_k}$.
This results in $\scp{x_{k+1}}{a_{i_{k}}} = b_{i_{k}}$, i.e., the next iterate
$x_{k+1}$ is on the hyperplane defined by the $i_{k}$-th equation of
the system, but since $v_{i_{k}}$ is not orthogonal to this
hyperplane, this is an \emph{oblique} projection, instead of an
orthogonal projection as it would be in the original Kaczmarz method (see Figure~\ref{fig:oblique-projection}).

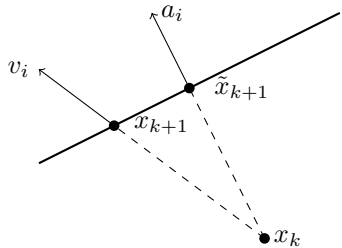
\begin{figure}[htb]
  \centering

\begin{tikzpicture}
  \fill (0,0) circle (2pt) node[right]{$x_{k}$};

  \draw[thick] (-1,2)++(-2,-1) -- (-1,2)--++(2,1);
  
  \fill (-1,2) circle (2pt) node[right=6pt]{$\tilde x_{k+1}$};
  
  \draw[dashed] (0,0) -- (-1,2);
  
  \draw[->] (-1,2)--+(-0.5,1)node[right]{$a_{i}$};
  
  \fill (-2,1.5) circle(2pt) node[right=4pt]{$x_{k+1}$};
  
  \draw[dashed] (0,0) -- (-2,1.5);
  
  \draw[->] (-2,1.5) --++ (-1,0.75) node[left]{$v_{i}$};
\end{tikzpicture}

\caption{Oblique projection $x_{k+1}$ of $x_{k}$ onto the hyperplane $\{x\mid \scp{a_{i}}{x}=b_{i}\}$. The orthogonal projection is $\tilde x_{k+1}$.}
\label{fig:oblique-projection}
\end{figure}



To formulate the convergence theorem for RKMA we denote by $\lambda_{\min}(M)$ the smallest
eigenvalue of a symmetric real matrix $M$.
For general (non-symmetric) real square matrices $M$ we denote by $\rho(M)$ its \emph{spectral radius}, i.e. the largest absolute value of its eigenvalues.

First we state a result on the expected outcome of one step of RKMA.
A similar result has been observed earlier in the case with
no mismatch, see e.g.~\cite{strohmer2009randomized}, \cite[Lemma 2.2]{needell2010noisy}) or~\cite[Lemma 3.6]{zouzias2013randomized}).
In the following, we generally assume that the rows of $A$ and $V$ fulfill $\scp{a_{i}}{v_{i}}\neq 0$ and, without loss of generality, that $\scp{a_{i}}{v_{i}}>0$.
\begin{lem}
  \label{lem:est-one-step}
  Let $\hat x$ fulfill $A\hat x=b$, $x$ be arbitrary and
  $x^{+} = x - \tfrac{\scp{a_{i}}{x}-b_{i}}{\scp{a_{i}}{v_{i}}} \cdot v_{i}$
  be the oblique projection onto the hyperplane
  $\{x\mid \scp{a_{i}}{x}=b_{i}\}$. Further we let $p_{i}>0$, $i=1,\dots,m$, be probabilities and denote
  $D:=\diag\big(\tfrac{p_i}{\scp{a_i}{v_i}}\big)$ and
  $S:=\diag\big(\tfrac{\norm{v_i}^2}{\scp{a_i}{v_i}}\big)$.
  If $i$ is randomly chosen with
  probability $p_{i}$ (i.e. $x^{+}$ is a random variable) then it
  holds that
  \begin{equation}\label{eq:estimated-step}
    \EE(x^{+}-\hat x) = (I - V^{T}DA)(x-\hat x)
  \end{equation}
  and if 
  \begin{equation}
    \lambda := \lambda_{\min}\big(V^{T}DA + A^{T}DV - A^{T}SDA\big)>0  \label{eq:CC}
  \end{equation}
  is fulfilled, it holds that 
  \[
  \EE(\norm{x^{+}-\hat x}^{2})\leq (1-\lambda)\cdot \norm{x-\hat x}^{2}
  \]
  (where both expectations are with respect to the probabilities $p_{i}$).
\end{lem}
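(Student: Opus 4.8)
The plan is to compute both the expectation and the second moment of one RKMA step directly, expanding over the random choice of index $i$ with probability $p_i$. First I would write, for a fixed index $i$,
\[
  x^{+} - \hat x = (x - \hat x) - \frac{\scp{a_i}{x} - b_i}{\scp{a_i}{v_i}}\, v_i
  = (x - \hat x) - \frac{\scp{a_i}{x - \hat x}}{\scp{a_i}{v_i}}\, v_i
  = \Bigl(I - \tfrac{1}{\scp{a_i}{v_i}}\, v_i a_i^{T}\Bigr)(x - \hat x),
\]
using $A\hat x = b$ to replace $b_i$ by $\scp{a_i}{\hat x}$. Taking expectation over $i$ and recalling $D = \diag\bigl(p_i/\scp{a_i}{v_i}\bigr)$, the weighted sum $\sum_i p_i\, v_i a_i^{T}/\scp{a_i}{v_i}$ is exactly $V^{T} D A$, which gives~\eqref{eq:estimated-step}.

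For the second moment, I would set $e := x - \hat x$ and compute $\norm{x^{+} - \hat x}^{2}$ for fixed $i$ by expanding the square:
\[
  \norm{x^{+} - \hat x}^{2}
  = \norm{e}^{2}
    - 2\,\frac{\scp{a_i}{e}}{\scp{a_i}{v_i}}\,\scp{v_i}{e}
    + \frac{\scp{a_i}{e}^{2}}{\scp{a_i}{v_i}^{2}}\,\norm{v_i}^{2}.
\]
Now I take the expectation over $i$: the first term is $\norm{e}^{2}$; the second term becomes $-2\sum_i \tfrac{p_i}{\scp{a_i}{v_i}}\,\scp{v_i}{e}\scp{a_i}{e} = -2\,\scp{e}{V^{T}DA\,e}$ (splitting the single cross term symmetrically yields the $V^{T}DA + A^{T}DV$ combination after using $\scp{e}{V^{T}DA\,e} = \tfrac12\scp{e}{(V^{T}DA + A^{T}DV)e}$); the third term becomes $\sum_i \tfrac{p_i \norm{v_i}^{2}}{\scp{a_i}{v_i}^{2}}\,\scp{a_i}{e}^{2} = \scp{e}{A^{T}SDA\,e}$, since the $i$-th diagonal entry of $SD$ is $\tfrac{\norm{v_i}^{2}}{\scp{a_i}{v_i}}\cdot\tfrac{p_i}{\scp{a_i}{v_i}} = \tfrac{p_i\norm{v_i}^{2}}{\scp{a_i}{v_i}^{2}}$. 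Collecting terms,
\[
  \EE\bigl(\norm{x^{+} - \hat x}^{2}\bigr)
  = \norm{e}^{2} - \scp{e}{\bigl(V^{T}DA + A^{T}DV - A^{T}SDA\bigr)e}.
\]
Finally, under assumption~\eqref{eq:CC} the symmetric matrix $V^{T}DA + A^{T}DV - A^{T}SDA$ has smallest eigenvalue $\lambda > 0$, so $\scp{e}{(\cdots)e} \ge \lambda\norm{e}^{2}$, which gives $\EE(\norm{x^{+} - \hat x}^{2}) \le (1 - \lambda)\norm{e}^{2}$ as claimed.

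The computation itself is routine bookkeeping; the only genuinely delicate point is making sure the matrix appearing in the second-moment bound is written in manifestly symmetric form. The cross term naturally produces $V^{T}DA$, which is not symmetric in general, so one must symmetrize it as $\tfrac12(V^{T}DA + A^{T}DV)$ inside the quadratic form (legitimate because $\scp{e}{Me} = \scp{e}{\tfrac12(M + M^{T})e}$ for any $M$) in order to match the statement and to make $\lambda_{\min}$ meaningful. I would also remark in passing that $\lambda \le 1$ automatically, since $\EE(\norm{x^{+} - \hat x}^{2}) \ge 0$ forces $1 - \lambda \ge 0$ whenever there exists an admissible error $e \ne 0$; this is not needed for the proof but is worth noting. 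No convexity, no iteration, and no probabilistic machinery beyond linearity of expectation over a finite index set is required here.
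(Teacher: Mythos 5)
Your proof is correct and follows essentially the same route as the paper: write the step as $(I-\tfrac{1}{\scp{a_i}{v_i}}v_ia_i^T)(x-\hat x)$ and sum with weights $p_i$ to get \eqref{eq:estimated-step}, then expand the squared norm, take the expectation to obtain the quadratic form in $2V^TDA - A^TSDA$, symmetrize the cross term, and invoke $\lambda_{\min}$ from \eqref{eq:CC}. The bookkeeping (including the identification of $SD$ with the diagonal $p_i\norm{v_i}^2/\scp{a_i}{v_i}^2$) matches the paper's computation, so there is nothing to add.
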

\begin{proof}
  Since $b_{i} = \scp{a_{i}}{\hat x}$, the expectation $\EE(x^{+}-\hat x)$ is 
  \begin{align*}
    \EE(x^{+}-\hat x) & = \sum_{i=1}^{m}p_{i}\cdot (x - \frac{\scp{a_{i}}{x}-b_{i}}{\scp{a_{i}}{v_{i}}}\cdot v_{i}) - \hat x\\
    & = x - \sum_{i=1}^{m}p_{i}\cdot \frac{\scp{a_{i}}{x-\hat x}}{\scp{a_{i}}{v_{i}}}\cdot v_{i} - \hat x\\
    & = x - \hat x - \sum_{i=1}^{m}\tfrac{p_{i}}{\scp{a_{i}}{v_{i}}}\cdot v_{i}a_{i}^{T}(x-\hat x),
  \end{align*}
	from which~\eqref{eq:estimated-step} follows.
  To calculate the expectation of the squared norm we calculate
  \begin{align}
    \norm{x^{+}-\hat{x}}^2 = &\norm{x-\hat{x}}^2 - 2 \cdot \frac{\scp{a_{i}}{x-\hat{x}} \cdot \scp{v_{i}}{x-\hat{x}}}{\scp{a_{i}}{v_{i}}} \nonumber \\
                               & + \frac{\big(\scp{a_{i}}{x-\hat x}\big)^2}{\big(\scp{a_{i}}{v_{i}}\big)^2} \cdot \norm{v_{i}}^2 \,. \label{eq:error_exact}
  \end{align}
  Taking the expectation gives
  \begin{align*}
    \EE(\norm{x^{+}-\hat{x}}^2) = & \norm{x-\hat{x}}^2 \\
                                  & - \sum_{i=1}^{m} p_i \cdot 2 \cdot \frac{\scp{a_i}{x-\hat{x}} \cdot \scp{v_i}{x-\hat{x}}}{\scp{a_i}{v_i}} \\
                                  & + \sum_{i=1}^{m} p_i \cdot\frac{\big(\scp{a_i}{x-\hat x}\big)^2}{\big(\scp{a_i}{v_i}\big)^2} \cdot \norm{v_i}^2 \,.
  \end{align*}
  By the definition of $D$ and $S$ the right hand side can be written as
  \begin{align}
    \norm{x-\hat x}^{2} - \scp{x-\hat x}{(2V^{T}DA -
      A^{T}SDA)(x-\hat{x})}\nonumber\\
    = \norm{x-\hat x}^{2} - \scp{x-\hat x}{(2V^{T} -
      A^{T}S)DA(x-\hat{x})}\label{eq:est_rkma-aux}
  \end{align}
  and hence, we aim to bound $\scp{x-\hat x}{(2V^{T} -
    A^{T}S)DA(x-\hat{x})}$ from below. More precisely, we want
  \[
  \scp{x-\hat x}{(2V^{T} - A^{T}S)DA(x-\hat{x})}\geq \lambda\cdot \norm{x-\hat x}^{2}
  \]
  and this is the case if and only if
  \[
  \scp{x-\hat x}{((2V^{T} - A^{T}S)DA - \lambda I)(x-\hat{x})}\geq 0.
  \]
  Since we have $2\scp{z}{V^TDA z} = \scp{z}{(V^TDA+A^TDV) z}$ for all $z$, this is equivalent to
  \[
  \scp{x-\hat x}{(V^{T}DA + A^{T}DV - A^{T}SDA - \lambda I)(x-\hat{x})}\geq 0
  \]
  and this is ensured if
  \[
  \lambda_{\min}(V^{T}DA + A^{T}DV -A^{T}SDA)\geq \lambda.
  \]
  Hence, if~\eqref{eq:CC} is fulfilled, we obtain the estimate
  \[
  \EE(\norm{x_{k+1}-\hat{x}}^2) \le (1- \lambda) \cdot \norm{x-\hat{x}}^2 \,.\qedhere
  \]
\end{proof}
Equation~\eqref{eq:estimated-step} shows that $\norm{\EE(x^{+}-\hat x)}^{2}\leq \norm{I-V^{T}DA}^{2}\norm{x-\hat x}^{2}$. Recall that $\rho(M)\leq \norm{M}$ for asymmetric matrices $M$, and note that the above inequality is not true, if we replace the norm by the spectral radius.
Due to Jensen's inequality we generally have $\norm{\EE(x^{+}-\hat x)}^{2}\leq \EE(\norm{x^{+}-\hat x}^{2})$ and Lemma~\ref{lem:est-one-step} provides different estimates for both quantities.

Iterating the previous lemma, we obtain the convergence result:
\begin{thm}
  \label{thm:RKMA}
  Assume that the assumptions of Lemma~\ref{lem:est-one-step} are
  fulfilled and denote by $x_k$ the iterates of
  Algorithm~\ref{alg:RKMA}.
  
  If $\rho(I-V^{T}DA)<1$ then $x_k$ converges in expectation to $\hat x$,
  \[
  \EE(x_{k}-\hat x) \to 0 \quad \mbox{for} \quad k \to \infty \,,
  \]
  moreover, it holds that
  \[
  \norm{\EE(x_{k}-\hat x)}\leq \norm{I-V^{T}DA}^{k}\norm{x_{0}-\hat x}.
  \]
  If condition \eqref{eq:CC} 
  is fulfilled then it holds that
  \begin{equation*}
    \EE \left[\norm{x_{k+1}-\hat{x}}^2\right] \le (1- \lambda) \cdot \EE
    \left[\norm{x_k-\hat{x}}^2 \right] \,.
  \end{equation*}
\end{thm}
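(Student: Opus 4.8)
The plan is to iterate Lemma~\ref{lem:est-one-step} by means of the tower property of conditional expectation. Let $\Fcal_k$ denote the $\sigma$-algebra generated by the first $k$ random indices $i_0,\dots,i_{k-1}$, so that $x_k$ is $\Fcal_k$-measurable while $i_k$ is drawn with probability $p_i$ independently of $\Fcal_k$. Conditioning on $\Fcal_k$ and applying Lemma~\ref{lem:est-one-step} with the vector $x=x_k$ (which, given $\Fcal_k$, plays the role of the arbitrary fixed vector in the lemma) and $x^{+}=x_{k+1}$ yields
\[
\EE(x_{k+1}-\hat x\mid\Fcal_k) = (I-V^{T}DA)(x_k-\hat x),
\]
and, whenever~\eqref{eq:CC} holds,
\[
\EE(\norm{x_{k+1}-\hat x}^{2}\mid\Fcal_k)\leq (1-\lambda)\,\norm{x_k-\hat x}^{2}.
\]
Taking the total expectation of the second inequality and using $\EE(\EE(\,\cdot\mid\Fcal_k)) = \EE(\,\cdot\,)$ gives the last claim $\EE[\norm{x_{k+1}-\hat x}^{2}]\leq (1-\lambda)\,\EE[\norm{x_k-\hat x}^{2}]$ immediately.

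For the first-moment statements, I would take the total expectation of the first identity to obtain the recursion $\EE(x_{k+1}-\hat x) = (I-V^{T}DA)\,\EE(x_k-\hat x)$, and then unroll it down to the deterministic starting point, which gives $\EE(x_k-\hat x) = (I-V^{T}DA)^{k}(x_0-\hat x)$. Submultiplicativity of the operator norm then yields $\norm{\EE(x_k-\hat x)}\leq \norm{I-V^{T}DA}^{k}\norm{x_0-\hat x}$, which is the stated bound. For the convergence $\EE(x_k-\hat x)\to 0$ I would invoke the standard fact that a real square matrix $M$ satisfies $M^{k}\to 0$ if and only if $\rho(M)<1$ (for instance via the Jordan canonical form, or from Gelfand's formula $\rho(M)=\lim_{k}\norm{M^{k}}^{1/k}$) and apply it to $M=I-V^{T}DA$, so that $(I-V^{T}DA)^{k}(x_0-\hat x)\to 0$.

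I do not expect a genuine obstacle here, since all the analytic content is already in Lemma~\ref{lem:est-one-step}; the only point that requires care is the measurability bookkeeping. Lemma~\ref{lem:est-one-step} is stated for a fixed (deterministic) vector, so to apply it at step $k$ one must first condition on the history $\Fcal_k$ (equivalently, freeze $x_k$), use the lemma pointwise, and only afterwards take the outer expectation; writing this out with conditional expectations keeps the argument rigorous. It is also worth emphasizing, as already noted after Lemma~\ref{lem:est-one-step}, that the two conclusions are distinct estimates: the bound on $\norm{\EE(x_k-\hat x)}$ requires only the eigenvalue condition $\rho(I-V^{T}DA)<1$, whereas the bound on $\EE(\norm{x_k-\hat x}^{2})$ requires~\eqref{eq:CC}, and in general neither condition implies the other.
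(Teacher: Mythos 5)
Your proposal is correct and follows essentially the same route as the paper's proof: apply Lemma~\ref{lem:est-one-step} conditionally on the history $i_0,\dots,i_{k-1}$, take the full expectation (tower property) for the second-moment contraction, and unroll $\EE(x_{k+1}-\hat x)=(I-V^{T}DA)\EE(x_k-\hat x)$ together with the fact that $M^{k}\to 0$ iff $\rho(M)<1$ for the first-moment claims. Your explicit $\sigma$-algebra bookkeeping is just a more detailed write-up of the conditioning step the paper states briefly.
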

\begin{proof}
  The first claim follows from Lemma~\ref{lem:est-one-step} and the well known fact that $(I-V^{T}DA)^{k}\to 0$ if the spectral radius of $I-V^{T}DA$ is smaller than one (see, e.g.,~\cite[Theorem 11.2.1]{golub2013matrix}). The second claim is also immediate from the previous lemma.
  
  Finally, we get for expectation with respect to $i_{k}$ (conditional on $i_{0},\dots, i_{k-1}$)
  \[
  \EE \left[\norm{x_{k+1}-\hat{x}}^2\,\middle|\, i_0,\ldots,i_{k-1} \right] \le (1- \lambda) \norm{x_k-\hat{x}}^2
  \]
  Now we consider all indices $i_0,\ldots,i_k$ as random variables
  with values in $\{1,\ldots,m\}$, and take the full expectation on
  both sides to get the assertion.
\end{proof}

Here are some remarks on the result:

\begin{rem}
  Since eigenvalues depend continuously on perturbations, both
  condition~\eqref{eq:CC} and $\rho(I-V^{T}DA)<1$ are fulfilled for
  $V \approx A$. Note that $\norm{I-V^{T}DA} = \rho(I-V^{T}DA)$ does
  hold for $V = A$ and is generally not true otherwise. It may even be the case
  that $\norm{I-V^{T}DA}>1$ while $\rho(I-V^{T}DA)<1$.
\end{rem}

\begin{rem}[Relation to the result of Strohmer and Vershynin]\label{rem:strohmer-vershynin}
  Note that Theorem~\ref{thm:RKMA} contains the result of Strohmer and
  Vershynin~\cite{strohmer2009randomized} as a special case: Take
  $V=A$ and the probabilities $p_{i}$ proportional to the squared
  row-norms, i.e. $p_{i} =
  \frac{\norm{a_{i}}^{2}}{\norm[F]{A}^{2}}$. Then we have
  \[
  D=\diag\big(\tfrac{p_i}{\scp{a_i}{v_i}}\big) =
  \tfrac{1}{\norm[F]{A}^{2}}\cdot I\quad \text{and}\quad
  S=\diag\big(\tfrac{\norm{v_i}^2}{\scp{a_i}{v_i}}\big) = I
  \]
  and hence we get
  \[
  \lambda = \frac{\lambda_{\min}(A^{T}A)}{\norm[F]{A}^{2}} =
  \frac{\sigma_{\min}(A)}{\norm[F]{A}^{2}}
  \]
  (where $\sigma_{\min}(A)$ denotes the smallest singular value of $A$) as in~\cite{strohmer2009randomized}.
  
  To get a similarly simple expression for the convergence of the method with mismatch we  set
  \[
  p_{i} =
  \frac{\scp{a_{i}}{v_{i}}}{\norm[V]{A}^{2}},\quad\text{with}\quad\norm[V]{A}^{2}
  = \sum_{i}\scp{a_{i}}{v_{i}}.
  \]
  This leads to
  \[
  D = \tfrac{1}{\norm[V]{A}^{2}}\cdot I
  \]
  and thus, from~\eqref{eq:estimated-step},
  \[
  \norm{\EE(x_{k+1}-\hat x)}\leq \norm{I -
    \tfrac{V^{T}A}{\norm[V]{A}^{2}}} \norm{x_{k}-\hat x} =
  \sigma_{\max}(I -
  \tfrac{V^{T}A}{\norm[V]{A}^{2}})\cdot\norm{x_{k}-\hat x}.
  \]
  However, in general the contraction factor does not simplify to
  $1 - \tfrac{\sigma_{\min}(V^{T}A)}{\norm[V]{A}^{2}}$ as it would in
  the case with no mismatch.
  
  We also get
  \[
  \EE(\norm{x_{k+1}-\hat x}) \leq \left(1 - \tfrac{\lambda_{\min}(V^{T}A + A^{T}V - A^{T}SA)}{\norm[V]{A}^{2}}\right)^{1/2}\norm{x_{k}-\hat x}
  \]
  for the expectation of the error.
\end{rem}

\begin{rem}[Asymptotic convergence rate and expected improvement in norm]
  The above theorem states that the RKMA method has the asymptotic
  convergence rate of 
  \begin{equation}\label{eq:asymp-rate-RKMA}
    \rho(I-V^{T}DA)
  \end{equation}
  (in expectation), however, the
  expected improvement of the squared error, i.e.  $\EE(\norm{x_{k}-\hat x}^{2})$ in every iteration is
  \begin{equation}\label{eq:expected-improvement-RKMA}
    \begin{split}
      (1-\lambda_{\min}(V^{T}DA+A^{T}DV - A^{T}SDA)) & = \rho(I -
      V^{T}DA - A^{T}DV + A^{T}SDA)\\
      & = \norm{I -
      V^{T}DA - A^{T}DV + A^{T}SDA}.
    \end{split}
  \end{equation}
  Using the spectral norm we can also estimate
  \begin{equation*}
    \norm{\EE(x_{k+1}-\hat x)} = \norm{(I-V^{T}DA)(x_{k}-\hat x)} \leq \norm{I - V^{T}DA}\cdot \norm{x_{k}-\hat x}.
  \end{equation*}
  We can express this norm by the spectral radius as
  \begin{equation}\label{eq:est-norm-of-expectation}
    \norm{I-V^{T}DA} = \rho(I - V^{T}DA - A^{T}DV + A^{T}DVV^{T}DA).
  \end{equation}
  Note that all three expressions in~\eqref{eq:asymp-rate-RKMA},~\eqref{eq:expected-improvement-RKMA}~\eqref{eq:est-norm-of-expectation} are equal in the case of $V=A$, but for the mismatched case, they are  in general different.
  Numerically it seems like~$\eqref{eq:asymp-rate-RKMA}\leq~\eqref{eq:est-norm-of-expectation}\leq~\eqref{eq:expected-improvement-RKMA}$, but we do not have a proof for this.

\end{rem}

\begin{rem}[Different possibilities for stepsizes]
  We could consider the slightly more general iteration
  \[
  x_{k+1} = x_{k} -
  \omega_{i_{k}}\cdot (\scp{a_{i_{k}}}{x_{k}}-b_{i_{k}})\cdot v_{i_{k}}
  \]
  with a steplength $\omega_{i_{k}}$. The iteration in
  Algorithm~\ref{alg:RKMA} uses $\omega_{i} = \scp{a_{i}}{v_{i}}^{-1}$, but
  there are other meaningful choices:
  \begin{itemize}
  \item As for the case with no mismatch, one could take
    $\omega_{i_{k}} = \norm{a_{i_{k}}}^{-2}$, but this would not imply
    $\scp{x_{k+1}}{a_{i_{k}}} = b_{i_{k}}$. Similarly, 
    $\omega_{i} = \norm{v_{i_{k}}}^{-2}$ does not imply $\scp{x_{k+1}}{v_{i_{k}}} = b_{i_{k}}$.
    \item The choice $\omega_{i_{k}} = \frac{\scp{x_{k}}{v_{i_{k}}}-b_{i_{k}}}{(\scp{x_{k}}{a_{i_{k}}}-b_{i_{k}})\norm{v_{i}}^{2}}$ implies that  $\scp{x_{k+1}}{v_{i_{k}}} = b_{i_{k}}$.
  \end{itemize}
  Although none of these cases guarantees that the iterates solve one
  of the equations of the linear system $Ax=b$, one can still deduce
  that iterates converge to the solution of this system of equalities.
  The result of Theorem~\ref{thm:RKMA} can also be derived for this
  slightly more general iteration and the respective condition for
  linear convergence with contraction factor $(1-\lambda)$ is that
  \[
  \lambda := \lambda_{\min}\big(V^{T}DA + A^{T}DV - A^{T}SDA\big) > 0 
  \]
  with 
  \[
  D = \diag(p_{i}\omega_{i}),\qquad S = \diag(\omega_{i}\norm{v_{i}}^{2}).
  \]
\end{rem}

Experiments show that other probabilties than
$p_{i} = \norm{a_{i}}^{2}/\norm[F]{A}^{2}$ in the case $V=A$ or $p_{i} = \scp{a_{i}}{v_{i}}/\norm[V]{A}^{2}$ in the mismatched case frequently lead to faster
convergence. This should not be surprising as one could scale the rows
of system $Ax=b$ arbitrarily by multiplying with a diagonal matrix
which leaves the solution unchanged, but leads to arbitrary row-norms
of the scaled system. In this sense, the row-norms do not reflect the geometry of the arrangements of hyperplanes. We will come back to the problem of selecting probabilities
in Section~\ref{sec:optim-probs}.

\section{Inconsistent overdetermined systems}
\label{sec:inconsistent}

Now we consider the inconsistent case, i.e. we do not assume that the
overdetermined system has a solution. This case has been treated
in~\cite{needell2010noisy} for the case $V=A$. We model an additive
error and assume that the right hand side is $b+r$ with $b\in\rg A$.
\begin{thm}
  \label{thm:inconsitent}
  Denote by $\hat x$ the unique solution of $Ax=b$ and let $x_{k}$
  denote the iterates of Algorithm~\ref{alg:RKMA} where the right hand
  side is $b+r$.
  With $M = (I-V^{T}DA)$ it holds that
  \[
  \EE(x_{k}-\hat x) = M^{k}(x_{0}-\hat x) + \sum_{l=0}^{k-1}M^{l}V^{T}Dr.
  \]
  Moreover, with $\lambda$ defined in~\eqref{eq:CC}, we have
  \[
  \EE(\norm{x_{k}-\hat x}^{2})\leq (1-\tfrac{\lambda}{2})^{k}\cdot \norm{x_{0}-\hat x}^{2} + \tfrac2\lambda \cdot \gamma^{2}
  \]
  with $\gamma := \max_{i}\tfrac{|r_{i}|\cdot \norm{v_{i}}}{|\scp{a_{i}}{v_{i}}|}$.
\end{thm}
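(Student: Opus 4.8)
I would iterate a one-step relation for each of the two assertions, as in Lemma~\ref{lem:est-one-step} and Theorem~\ref{thm:RKMA}, but now carrying the inhomogeneity caused by $r$. For the first assertion, note that since $b_{i_k}=\scp{a_{i_k}}{\hat x}$ the update with right hand side $b+r$ reads
\[
x_{k+1}-\hat x \;=\; (x_k-\hat x) - \frac{\scp{a_{i_k}}{x_k-\hat x}}{\scp{a_{i_k}}{v_{i_k}}}\,v_{i_k} \;+\; \frac{r_{i_k}}{\scp{a_{i_k}}{v_{i_k}}}\,v_{i_k}.
\]
Taking the conditional expectation over $i_k$ reproduces the computation in the proof of Lemma~\ref{lem:est-one-step} on the first two terms and contributes $\sum_i \tfrac{p_i r_i}{\scp{a_i}{v_i}} v_i = V^{T}Dr$ from the last one, so that $\EE[x_{k+1}-\hat x\mid i_0,\dots,i_{k-1}] = M(x_k-\hat x)+V^{T}Dr$ with $M = I-V^{T}DA$. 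Taking the full expectation gives the recursion $\EE(x_{k+1}-\hat x) = M\,\EE(x_k-\hat x) + V^{T}Dr$ (here $V^{T}Dr$ is deterministic and $x_0$ is fixed), and unrolling it --- or a one-line induction on $k$ --- yields $\EE(x_k-\hat x) = M^{k}(x_0-\hat x) + \sum_{l=0}^{k-1} M^{l} V^{T}Dr$.

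For the squared-error bound I would split one step as $x_{k+1}-\hat x = y_{k+1} + w_{i_k}$, where $y_{k+1} := (x_k-\hat x) - \tfrac{\scp{a_{i_k}}{x_k-\hat x}}{\scp{a_{i_k}}{v_{i_k}}} v_{i_k}$ is the error of the noise-free oblique step started from $x_k$ (so $\scp{a_{i_k}}{y_{k+1}}=0$) and $w_{i_k} := \tfrac{r_{i_k}}{\scp{a_{i_k}}{v_{i_k}}} v_{i_k}$ satisfies $\norm{w_{i_k}} = \tfrac{|r_{i_k}|\,\norm{v_{i_k}}}{|\scp{a_{i_k}}{v_{i_k}}|}\le\gamma$. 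Expanding, $\norm{x_{k+1}-\hat x}^2 = \norm{y_{k+1}}^2 + 2\scp{y_{k+1}}{w_{i_k}} + \norm{w_{i_k}}^2$; by Lemma~\ref{lem:est-one-step}, $\EE[\norm{y_{k+1}}^2\mid i_0,\dots,i_{k-1}]\le(1-\lambda)\norm{x_k-\hat x}^2$, and $\EE[\norm{w_{i_k}}^2\mid i_0,\dots,i_{k-1}]\le\gamma^2$. For the cross term I would use Cauchy--Schwarz together with a weighted Young inequality $2\scp{y_{k+1}}{w_{i_k}}\le s\norm{y_{k+1}}^2 + s^{-1}\norm{w_{i_k}}^2$, choosing $s>0$ so that $(1+s)(1-\lambda)=1-\tfrac{\lambda}{2}$; taking conditional expectations this gives a one-step contraction $\EE[\norm{x_{k+1}-\hat x}^2\mid i_0,\dots,i_{k-1}]\le(1-\tfrac{\lambda}{2})\norm{x_k-\hat x}^2 + c\,\gamma^2$. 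Passing to full expectations, iterating, and summing the geometric series $\sum_{j\ge0}(1-\tfrac{\lambda}{2})^{j}=\tfrac{2}{\lambda}$ then produces an estimate of the asserted shape $(1-\tfrac{\lambda}{2})^{k}\norm{x_0-\hat x}^2 + \tfrac{2}{\lambda}\gamma^2$.

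The main obstacle is the cross term $2\scp{y_{k+1}}{w_{i_k}}$. In the matched case $V=A$ the perturbation $w_{i_k}$ points along $a_{i_k}$ while $y_{k+1}\in a_{i_k}^{\perp}$, so the cross term vanishes identically and one simply recovers the estimate of~\cite{needell2010noisy} with contraction factor $1-\lambda$; for a mismatched adjoint it does not vanish, and absorbing it into the contraction is precisely what degrades the factor to $1-\tfrac{\lambda}{2}$. The crude Young split already delivers a bound of the right shape --- geometric decay at rate $1-\tfrac{\lambda}{2}$ plus a constant multiple of $\gamma^2$ --- but with a somewhat larger coefficient of $\gamma^2$ than $\tfrac{2}{\lambda}$, so matching the stated constant exactly requires a sharper treatment of the cross term (for instance, completing the square in $x_k-\hat x$ against the matrix $Q:=V^{T}DA+A^{T}DV-A^{T}SDA$, whose smallest eigenvalue equals $\lambda$ by~\eqref{eq:CC}, before applying that eigenvalue bound). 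I expect this refinement to be the only place where genuine care is needed; everything else is routine.
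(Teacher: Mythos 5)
Your route coincides with the paper's in both halves. For the expectation identity you use exactly the paper's decomposition $x_{k+1}-\hat x=(\tilde x_{k+1}-\hat x)+\tfrac{r_{i_k}}{\scp{a_{i_k}}{v_{i_k}}}v_{i_k}$ into the noise-free oblique step plus a perturbation, take the conditional expectation, and induct. For the squared error you expand the same three terms, bound the perturbation by $\gamma$, apply Lemma~\ref{lem:est-one-step} to the noise-free part, and treat the cross term by Cauchy--Schwarz plus Young with precisely the paper's parameter $\epsilon=\tfrac{\lambda}{2(1-\lambda)}$ (your $s$), then sum the geometric series. So there is no methodological difference; the only issue is the constant in front of $\gamma^{2}$, where you stop short of the stated $\tfrac{2}{\lambda}$.

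That stopping point is, however, exactly where the paper's own computation is not sound, and your honest accounting is the correct one. With $\epsilon=\tfrac{\lambda}{2(1-\lambda)}$ one indeed has $(1+\epsilon)(1-\lambda)=1-\tfrac{\lambda}{2}$, but $1+\tfrac{1}{\epsilon}=\tfrac{2-\lambda}{\lambda}$, not $1-\tfrac{\lambda}{2}$ as asserted in the paper; carried out correctly, this argument yields $\EE(\norm{x_{k}-\hat x}^{2})\le(1-\tfrac{\lambda}{2})^{k}\norm{x_{0}-\hat x}^{2}+\tfrac{2(2-\lambda)}{\lambda^{2}}\gamma^{2}$, i.e.\ precisely the ``same shape, larger constant'' outcome you predicted, and no choice of the Young parameter brings the offset below order $\gamma^{2}/\lambda^{2}$ (optimizing $\epsilon$ gives roughly $4\gamma^{2}/\lambda^{2}$ for small $\lambda$). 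The ``sharper treatment'' you hope for is therefore not supplied by the paper, and your suggestion of completing the square against $Q=V^{T}DA+A^{T}DV-A^{T}SDA$ does not obviously recover $\tfrac{2}{\lambda}$ either: after taking the conditional expectation the cross term equals $2\scp{c}{x_{k}-\hat x}$ with $c=\sum_{i}\tfrac{p_{i}r_{i}}{\scp{a_{i}}{v_{i}}}\bigl(v_{i}-\tfrac{\norm{v_{i}}^{2}}{\scp{a_{i}}{v_{i}}}a_{i}\bigr)$, which vanishes only in the matched case (your observation about $V=A$), and absorbing it into $\lambda\norm{x_{k}-\hat x}^{2}$ again costs a factor $1/\lambda$. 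In short, what you propose proves the theorem with $\tfrac{2(2-\lambda)}{\lambda^{2}}\gamma^{2}$ in place of $\tfrac{2}{\lambda}\gamma^{2}$; the constant as stated is not legitimately established by the paper's argument, so the gap you flag lies in the paper's proof rather than in your reasoning.
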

\begin{proof}
  For the iterate $x_{k}$ we denote by $\tilde x_{k+1}$ the oblique
  projection onto the ``true hyperplane''
  $H = \{x\mid \scp{a_{i}}{x}=b_{i}\}$, i.e.
  $\tilde x_{k+1} = x_{k} - \frac{\scp{a_{i}}{x_{k}} -
    b_{i}}{\scp{v_{i}}{a_{i}}}\cdot v_{i}$. Then it holds that 
  \[
  x_{k+1}-\hat x = \tilde x_{k+1}-\hat x + \tfrac{r_{i}}{\scp{a_{i}}{v_{i}}}\cdot v_{i}.
  \]
  For one step of the method we get (taking the expectation with respect to the random variable $i_{k+1}$)
  \[
  \EE(x_{k+1}-\hat x) = \EE(\tilde x_{k+1}-\hat x) + \EE(\tfrac{r_{i_{k}}}{\scp{a_{i_{k}}}{v_{i_{k}}}}v_{i_{k}}) = (I-V^{T}DA)(x_{k}-\hat x) + V^{T}Dr.
  \]
  The formula for $\EE(x_{k}-\hat x)$ (with the expectation with
  respect to all indices $i_{0},\dots,i_{k}$) follows by induction.
  
  Moreover, we get
  \begin{align*}
    \norm{x_{k+1}-\hat x}^{2} &  = \norm{\tilde x_{k+1}-\hat x}^{2} + 2\tfrac{r_{i}}{\scp{a_{i}}{v_{i}}}\cdot \scp{\tilde x_{k+1}-\hat x}{v_{i}} + \tfrac{r_{i}^{2}}{\scp{a_{i}}{v_{i}}^{2}}\cdot \norm{v_{i}}^{2}\\
    & \leq \norm{\tilde x_{k+1}-\hat x}^{2} + 2\tfrac{r_{i}}{\scp{a_{i}}{v_{i}}}\cdot \scp{\tilde x_{k+1}-\hat x}{v_{i}} + \gamma^{2}.
  \end{align*}
  Now we use Cauchy-Schwarz and Young with $\epsilon>0$ (i.e. $2ab\leq \epsilon a^{2} + b^{2}/\epsilon$) to get
  \begin{align*}
    \norm{x_{k+1}-\hat x}^{2} &\leq \norm{\tilde x_{k+1}-\hat x}^{2} + 2\norm{\tilde x_{k+1}-\hat x}\cdot \tfrac{r_{i}}{\scp{a_{i}}{v_{i}}}\cdot \norm{v_{i}} + \gamma^{2}\\
    & \leq (1+\epsilon)\cdot \norm{\tilde x_{k+1}-\hat x}^{2} + (1+\tfrac1\epsilon)\cdot \gamma^{2}.
  \end{align*}
  Applying Lemma~\ref{lem:est-one-step} we get
  \[
    \EE(\norm{x_{k+1}-\hat x}^{2})  \leq (1+\epsilon)\cdot (1-\lambda)\cdot \norm{x_{k}-\hat x}^{2} + (1+\tfrac1\epsilon)\cdot \gamma^{2}.
  \]
  Recursively we obtain
  \[
  \EE(\norm{x_{k}-\hat x}^{2}) \leq
  \Big((1+\epsilon)\cdot (1-\lambda)\Big)^{k}\cdot \norm{x_{0}-\hat x}^{2} +
  \sum_{j=0}^{k-1}\Big((1+\epsilon)\cdot (1-\lambda)\Big)^{j}\cdot (1+\tfrac1\epsilon)\cdot \gamma^{2}.
  \]
  Now we choose  $\epsilon = \tfrac{\lambda}{2(1-\lambda)}$, observe that
  \[
  (1-\lambda)\cdot (1+\epsilon) = 1-\tfrac{\lambda}2\quad\text{and}\quad (1 + \tfrac1\epsilon) = 1 - \tfrac\lambda2
  \]
  and get
  \[
  \begin{split}
    \EE(\norm{x_{k}-\hat x}^{2}) & \leq
    (1-\tfrac\lambda2)^{k}\cdot \norm{x_{0}-\hat x}^{2} +
    \sum_{j=0}^{k-1}(1-\tfrac\lambda2)^{j+1}\cdot \gamma^{2}\\
    & \leq
    (1-\tfrac\lambda2)^{k}\cdot \norm{x_{0}-\hat x}^{2} +
    \tfrac{2-\lambda}{\lambda}\cdot \gamma^{2}
  \end{split}
  \]
  which proves the claim.
\end{proof}

The first equation in Theorem~\ref{thm:inconsitent} shows that the
iteration of RKMA will reach a final error of the order of
$\norm{\sum_{l=0}^{\infty}M^{l}V^{T}Dr} = \norm{(I-M)^{-1}V^{T}Dr} =
\norm{(V^{T}DA)^{-1}V^{T}Dr}$ if $\rho(M)<1$.

\section{Underdetermined systems}
\label{sec:underdetemined}

Now we consider the underdetermined case, i.e. the case where $m<n$,
but we will still assume full row rank of $A$ and $V$. 
In the case of no mismatch, linear convergence has been proven for the probabilities $p_{i} = \norm{a_{i}}^{2}/\norm[F]{A}^{2}$ in~\cite{liu2014asynchronous}.
In this case,
Theorem~\ref{thm:RKMA} does never ensure convergence: On the one hand, the matrix
$V^{T}DA + A^{T}DV$ is never positive definite, so $\lambda$ from \eqref{eq:CC} is always zero. On the other hand, $V^{T}DA$ always has a non-trivial kernel, and thus, $I - V^{T}DA$ always has a spectral radius equal to one.  However, the iteration
often converges in practice and this is due to the following simple
observation: All the iterates $x_{k}$ of Algorithm~\ref{alg:RKMA} are
in $\rg V^{T}$ if the starting point $x_{0}$ is there. So, if the
equation $Ax = b$ has a solution $\hat x$ in $\rg V^{T}$, then all
vectors $x_{k}-\hat x$ are also in the range.

Inspecting the proof of
Lemma~\ref{lem:est-one-step} we note that the constant $\lambda$
that needs to be positive to guarantee improvement in each step, is in fact not the smallest eigenvalue of
$V^{T}DA + V^{T}DA - A^{T}DSA$ but the smallest eigenvalue of this
matrix when restricted to the range of $V^{T}$. More explicitly, let
$Z\in\RR^{n\times m}$ be a matrix whose columns form on orthonormal basis
of $\rg V^{T}$. So, the term in~\eqref{eq:est_rkma-aux} can also be written as
\begin{align*}
  \norm{x_{k}-\hat x}^{2} - \scp{ZZ^{T}(x_{k}-\hat x)}{(2V^{T}DA - A^{T}DSA)ZZ^{T}(x_{k}-\hat x)}\\
= \norm{x_{k}-\hat x}^{2} - \scp{Z^{T}(x_{k}-\hat x)}{Z^{T}(2V^{T}DA - A^{T}DSA)ZZ^{T}(x_{k}-\hat x)}.
\end{align*}
Consequently, we need an estimate of the form
\[
\scp{Z^{T}(x_{k}-\hat x)}{Z^{T}(2V^{T}DA - A^{T}DSA)ZZ^{T}(x_{k}-\hat x)}\geq \lambda\cdot \norm{x_{k}-\hat x}^{2}
\]
and, since $\norm{Z^{T}(x_{k}-\hat x)}^{2} = \norm{x_{k}-\hat x}^{2}$, this is fulfilled for
\[
\lambda = \lambda_{\min}(Z^{T}(V^{T}DA +A^{T}DV- A^{T}DSA)Z).
\]

Similarly, the convergence of $\EE(x_{k}-\hat x)$ is equivalent to the convergence of $\EE(Z^{T}(x_{k}-\hat x))$, and it holds that
\begin{align*}
\EE(Z^{T}(x_{k+1}-\hat x)) & = Z^{T}(I-V^{T}DA)(x_{k}-\hat x)\\
                          & = Z^{T}(I-V^{T}DA)ZZ^{T}(x_{k}-\hat x)\\
                          & = (I - Z^{T}V^{T}DAZ)Z^{T}(x_{k}-\hat x).
\end{align*}
Finally, note that the system $Ax=b$ has only one solution that lies in $\rg V^{T}$ if $AV^{T}$ is non-singular.

Thus, we have proved the following theorem:

\begin{thm}
  \label{thm:RKMA-underdetermined}
  Consider the consistent system~\eqref{eq:OCS} with
  $A,V\in\RR^{m\times n}$ for $m\leq n$ both with full row rank such that $AV^{T}$ is non-singular.
  Furthermore let the columns of $Z$ be an orthonormal basis for $\rg V^{T}$ and let
  $p\in\RR^{m}$ with $p_{i}\geq 0$ and $\sum_{i}p_{i}=1$ and set
  $D:=\diag\big(\tfrac{p_i}{\scp{a_i}{v_i}}\big)$ and
  $S:=\diag\big(\tfrac{\norm{v_i}^2}{\scp{a_i}{v_i}}\big)$.  Then it holds:
  \begin{enumerate}
  \item The system $Ax=b$ has exactly one solution $\hat x$ that lies in
    $\rg V^{T}$.
  
  \item If $x_{0}\in\rg V^{T}$ and $\rho(I-Z^{T}V^{T}DAZ)<1$,
    then the iterates of Algorithm~\ref{alg:RKMA} fulfill
    \[
    \EE(x_{k}-\hat x)\to 0\quad \mbox{for} \quad k \to \infty .
    \]
  \item 
    If $x_{0}\in\rg V^{T}$ and 
    \begin{equation}
      \lambda := \lambda_{\min}\big(Z^{T}(V^{T}DA + A^{T}DV - A^{T}SDA)Z\big) > 0  \label{eq:CC-underdetermined}
    \end{equation}
    is fulfilled, then it holds that
    \begin{equation*}
      \EE \left[\norm{x_{k+1}-\hat{x}}^2\right] \le (1- \lambda) \cdot \EE
      \left[\norm{x_k-\hat{x}}^2 \right] \,.
    \end{equation*}
  \end{enumerate}

\end{thm}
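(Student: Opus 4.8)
The plan is to exploit the invariance of $\rg V^{T}$ under the RKMA iteration and reduce all three claims to the one-step estimates already contained in (the proof of) Lemma~\ref{lem:est-one-step}, read off in the coordinates supplied by $Z$.

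For part~1 I would argue directly: since the system is consistent it has at least one solution, and any vector $V^{T}y$ with $AV^{T}y=b$ is such a solution; as $AV^{T}$ is non-singular, $y=(AV^{T})^{-1}b$ exists, so $\hat x := V^{T}(AV^{T})^{-1}b\in\rg V^{T}$ solves $Ax=b$. For uniqueness in $\rg V^{T}$, if $V^{T}y_{1}$ and $V^{T}y_{2}$ both solve the system then $AV^{T}(y_{1}-y_{2})=0$, hence $y_{1}=y_{2}$ by non-singularity of $AV^T$, hence $V^{T}y_{1}=V^{T}y_{2}$. Next I would record the elementary fact underlying everything else: the update in line~4 of Algorithm~\ref{alg:RKMA} adds a multiple of $v_{i_{k}}\in\rg V^{T}$, so $x_{0}\in\rg V^{T}$ forces $x_{k}\in\rg V^{T}$ for all $k$, and therefore $x_{k}-\hat x\in\rg V^{T}=\rg Z$. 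Since $Z$ has orthonormal columns this gives $x_{k}-\hat x = ZZ^{T}(x_{k}-\hat x)$ and $\norm{x_{k}-\hat x}=\norm{Z^{T}(x_{k}-\hat x)}$, the two identities that make the reduction work.

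For part~2 I would take the one-step identity $\EE(x_{k+1}-\hat x\mid i_{0},\dots,i_{k-1}) = (I-V^{T}DA)(x_{k}-\hat x)$ from Lemma~\ref{lem:est-one-step}, multiply by $Z^{T}$, and insert $x_{k}-\hat x = ZZ^{T}(x_{k}-\hat x)$ to obtain $Z^{T}\EE(x_{k+1}-\hat x\mid\cdot) = (I-Z^{T}V^{T}DAZ)\,Z^{T}(x_{k}-\hat x)$. Taking full expectations and iterating yields $Z^{T}\EE(x_{k}-\hat x) = (I-Z^{T}V^{T}DAZ)^{k}Z^{T}(x_{0}-\hat x)$, which tends to $0$ when $\rho(I-Z^{T}V^{T}DAZ)<1$; since $\EE(x_{k}-\hat x)\in\rg Z$ as well, $\norm{\EE(x_{k}-\hat x)}=\norm{Z^{T}\EE(x_{k}-\hat x)}\to 0$. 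For part~3 I would return to identity~\eqref{eq:est_rkma-aux}, i.e. $\EE(\norm{x_{k+1}-\hat x}^{2}\mid\cdot) = \norm{x_{k}-\hat x}^{2} - \scp{x_{k}-\hat x}{(2V^{T}-A^{T}S)DA(x_{k}-\hat x)}$, substitute $x_{k}-\hat x = ZZ^{T}(x_{k}-\hat x)$ into both slots of the inner product, symmetrize via $2\scp{z}{V^{T}DAz}=\scp{z}{(V^{T}DA+A^{T}DV)z}$, and bound the resulting quadratic form below by $\lambda\norm{Z^{T}(x_{k}-\hat x)}^{2}=\lambda\norm{x_{k}-\hat x}^{2}$ using~\eqref{eq:CC-underdetermined}. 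This gives $\EE(\norm{x_{k+1}-\hat x}^{2}\mid i_{0},\dots,i_{k-1})\leq(1-\lambda)\norm{x_{k}-\hat x}^{2}$, and taking full expectations exactly as in the last step of the proof of Theorem~\ref{thm:RKMA} completes the argument.

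I do not anticipate a genuine obstacle here; the substantive work is already done in Lemma~\ref{lem:est-one-step}. The only point needing care is the bookkeeping of the subspace reduction: one must prove part~1 \emph{before} the others so that $\hat x$ really lies in $\rg V^{T}$ (otherwise $x_{k}-\hat x\notin\rg Z$ and the identities $\norm{x_{k}-\hat x}=\norm{Z^{T}(x_{k}-\hat x)}$ and $x_{k}-\hat x = ZZ^{T}(x_{k}-\hat x)$ fail), and one must check that the quadratic form obtained after inserting $Z$ is precisely $Z^{T}(V^{T}DA+A^{T}DV-A^{T}SDA)Z$ as in~\eqref{eq:CC-underdetermined}. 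Everything else is a faithful transcription of the overdetermined proof with $Z^{T}(\cdot)$ inserted in the right places.
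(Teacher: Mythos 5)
Your proposal is correct and takes essentially the same route as the paper: invariance of $\rg V^{T}$ under the iteration, the identities $x_{k}-\hat x = ZZ^{T}(x_{k}-\hat x)$ and $\norm{x_{k}-\hat x}=\norm{Z^{T}(x_{k}-\hat x)}$, and the $Z$-reduced versions of~\eqref{eq:estimated-step} and~\eqref{eq:est_rkma-aux} from Lemma~\ref{lem:est-one-step}. Your part~1 is just a slightly more explicit spelling-out (via $\hat x = V^{T}(AV^{T})^{-1}b$) of what the paper states in one line, so there is no substantive difference.
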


This result has the following practical implication:
If one can measure the quantity $x$ by linear measurements, encoded by the vectors $a_{i}$, but only has less measurements available than degrees of freedom in $x$, it is beneficial to use a mismatched adjoint $V$ with rows $v_{i}^{T}$ such that the $v_{i}$ are close to the vectors $a_{i}$ (such that the convergence condition is fulfilled), but which also ensure that $x$ is in the range of the vectors $v$.
Mismatched forward/back projection models in CT provide a useful example to illustrate this result.
Forward-projection in CT is often implemented using a ray-tracing algorithm known as Siddon's method~\cite{siddon1985fast}.
This algorithm models line integration and has the benefit of being computationally efficient and amenable to parallelization; 
however, it does not model the finite width of the detector bin.
This can lead to Moire pattern artifacts when using a matched forward/back-projection pair if the image pixel size is smaller than the detector bin size~\cite{de2004distance}. 
Mismatched projector pairs --- in which the backprojection operator models the finite detector bin width --- are often used to avoid these artifacts. We illustrate how RKMA can be used in this manner in Section~\ref{sec:numerics}.

\section{Optimizing the probabilities}
\label{sec:optim-probs}

In the case of exact adjoint, a common choice for the probabilities $p_{i}$ is to use $p_{i} = \norm{a_{i}}^{2}/\norm[F]{A}^2$ which leads to the simple expression $\lambda = \lambda_{\min}(A^{T}A)/\norm[F]{A}^{2}$.
However, numerical experiments show that this vector $p$ of probabilities does not lead to the best performance in practice.
This is of no surprise: For any diagonal matrix $W = \diag(w_{i})$ one can consider the problem $WA x = Wb$ which has different row norms,
while the each Kaczmarz iteration stays exactly the same.
This shows that the choice of probabilities by norms of the rows is in some sense arbitrary.
In~\cite{agaskar2014randomized} the authors proposed a method to find the smallest contraction factor of the method by minimizing the largest eigenvalue of an auxiliary matrix of size $\RR^{n^{2}\times n^{2}}$. Here we present a different method that also works for the case of mismatched adjoint.

Theorem~\ref{thm:RKMA} states that the asymptotic convergence rate is given by $\rho(I - V^{T}DA)$, while the expected improvement in each step is either expressed by
$1-\lambda_{\min}(V^{T}DA + A^{T}DV - A^{T}SDA)$ or $\norm{I-V^{T}DA}$
(recall that $D = \diag(p_{i}/\scp{a_{i}}{v_{i}})$ and $S = \diag(s_{i})$ with $s_{i} = \norm{v_{i}}^{2}/\scp{a_{i}}{v_{i}}$).
One would like to choose $p$ (i.e. $D$) in such a way that these quantities are as small as possible.
Numerically, we observe that the asymptotic rate is indeed quite tight, while the expected improvement is only a loose estimate in the case of mismatched adjoint.
However, the numerical radius of a non-symmetric matrix is not easily characterized and is neither a convex, nor concave function of the entries of the matrix.
The minimal eigenvalue of a symmetric matrix, on the other hand, is characterized by a minimization problem and it will turn out, that $\lambda_{\min}$ is indeed a concave function in $p$. 
Also, the spectral norm is convex and thus, the function $\norm{I-V^{T}DA}$ is also convex in $p$.
We therefore aim to choose $p$ such that $\lambda_{\min}$ is maximized or $\norm{I-V^{T}DA}$ is minimized, i.e. we aim to solve
\begin{equation}
  \label{eq:best-prob-vector}
  \max_{p}\ \lambda_{\min}(V^{T}DA + A^{T}DV - A^{T}SDA),\quad \text{s.t.}\quad \sum_{i=1}^{m}p_{i} = 1,\quad p\geq 0.
\end{equation}
or
\begin{equation}
  \label{eq:best-prob-vector_norm}
  \min_{p}\ \norm{I-V^{T}DA},\quad \text{s.t.}\quad \sum_{i=1}^{m}p_{i} = 1,\quad p\geq 0.
\end{equation}

\subsection{Maximizing $\lambda_{\min}$}
\label{sec:max_lambda_min}

The super-gradient of the objective functional in~\eqref{eq:best-prob-vector} is given by the next lemma:
\begin{lem}
  \label{lem:supergrad-smallest-ev}
  The function $f(p) =  \lambda_{\min}(V^{T}DA + A^{T}DV - A^{T}SDA)$ is concave.
  A super-gradient at $p$ is given by
  \[
  \frac{\partial\lambda_{\min}}{\partial p} = \left(\frac{\scp{2v_{i}-s_{i}a_{i}}{x}\scp{a_{i}}{x}}{\scp{a_{i}}{v_{i}}}\right)_{i=1,\dots,m}
  \]
  where $x$ is an eigenvector of $V^{T}DA + A^{T}DV - A^{T}SDA$ corresponding to the smallest eigenvalue.
\end{lem}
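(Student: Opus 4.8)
I would build the proof on two facts: the matrix inside $\lambda_{\min}$ is symmetric and depends \emph{linearly} on $p$, and $\lambda_{\min}$ of a symmetric matrix has the Rayleigh-quotient characterization $\lambda_{\min}(M) = \min_{\norm{x}=1}\scp{x}{Mx}$.

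First I would set $M(p) := V^{T}DA + A^{T}DV - A^{T}SDA$ and record two observations. (i) $M(p)$ is symmetric: $A^{T}SDA$ is symmetric since $S,D$ are diagonal, and $V^{T}DA + A^{T}DV = V^{T}DA + (V^{T}DA)^{T}$. (ii) Since $S$ does not depend on $p$ and $D = \diag\big(p_{i}/\scp{a_{i}}{v_{i}}\big)$ is linear in $p$, expanding the products row by row gives $M(p) = \sum_{i=1}^{m} p_{i} M_{i}$ with
\[
M_{i} = \tfrac{1}{\scp{a_{i}}{v_{i}}}\big(v_{i}a_{i}^{T} + a_{i}v_{i}^{T} - s_{i}\,a_{i}a_{i}^{T}\big).
\]
Concavity is then immediate: $f(p) = \lambda_{\min}(M(p)) = \min_{\norm{x}=1}\scp{x}{M(p)x} = \min_{\norm{x}=1}\sum_{i}p_{i}\scp{x}{M_{i}x}$ is a pointwise infimum of affine functions of $p$, hence concave.

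For the super-gradient, let $x$ be a unit eigenvector of $M(p)$ for the eigenvalue $\lambda_{\min}(M(p))$, so that $f(p) = \scp{x}{M(p)x} = \sum_{i}p_{i}\scp{x}{M_{i}x}$, while for any feasible $p'$ the Rayleigh bound yields $f(p') \le \scp{x}{M(p')x} = \sum_{i}p'_{i}\scp{x}{M_{i}x}$. Subtracting gives $f(p') - f(p) \le \sum_{i}(p'_{i}-p_{i})\scp{x}{M_{i}x}$, which is exactly the super-gradient inequality with $(\partial\lambda_{\min}/\partial p)_{i} = \scp{x}{M_{i}x}$. Finally a one-line computation with the rank-one and rank-two pieces of $M_{i}$, using symmetry of the inner product, gives
\[
\scp{x}{M_{i}x} = \frac{2\scp{v_{i}}{x}\scp{a_{i}}{x} - s_{i}\scp{a_{i}}{x}^{2}}{\scp{a_{i}}{v_{i}}} = \frac{\scp{2v_{i}-s_{i}a_{i}}{x}\,\scp{a_{i}}{x}}{\scp{a_{i}}{v_{i}}},
\]
which is the asserted formula.

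There is no substantial obstacle; the only points that need care are (a) emphasizing that the $p$-dependence is genuinely linear, so that concavity transfers through the composition and the ``chain rule'' for super-gradients is trivial, and (b) normalizing $x$ to $\norm{x}=1$ — for an unnormalized eigenvector the displayed expression must be divided by $\norm{x}^{2}$. If $\lambda_{\min}$ is a repeated eigenvalue the superdifferential is strictly larger (a convex combination of such rank-one terms over an orthonormal eigenbasis), but each individual vector $\big(\scp{x}{M_{i}x}\big)_{i}$ remains a valid super-gradient, so the statement is unaffected.
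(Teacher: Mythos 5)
Your proposal is correct and follows essentially the same route as the paper: the Rayleigh/min-max characterization of $\lambda_{\min}$, linearity of the matrix in $p$, concavity as a pointwise minimum of linear functions, and the super-gradient read off at the minimizing eigenvector. Your explicit verification of the super-gradient inequality and the remark that the eigenvector must be normalized (the paper's $\min_{\norm{x}=1}$ makes this implicit) are just slightly more detailed versions of the same argument.
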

\begin{proof}
  By the min-max principle for eigenvalues of symmetric matrices, we have
  \[
  \begin{split}
    \lambda_{\min}(V^{T}DA + A^{T}DV - A^{T}SDA) & = \min_{\norm{x}=1}\scp{(V^{T}DA + A^{T}DV - A^{T}SDA)x}{x}\\
    & = \min_{\norm{x}=1}\scp{DAx}{(2V - SA)x}\\
    & = \min_{\norm{x}=1}\sum_{i=1}^{m}p_{i}\frac{\scp{2v_{i}-s_{i}a_{i}}{x}\scp{a_{i}}{x}}{\scp{a_{i}}{v_{i}}}.
  \end{split}
  \]
  This shows that $f$ is a minimum over linear functions in $p$, and hence, concave.
  
  To compute a super-gradient, let $x$ be a minimizer, i.e. an
  eigenvector of $V^{T}DA + A^{T}DV - A^{T}SDA$ corresponding to the
  smallest eigenvalue. Since this is a point where the minimum is
  assumed, a super-gradient is given by
  \[
  \frac{\partial\lambda_{\min}}{\partial p} = \left(\frac{\scp{2v_{i}-s_{i}a_{i}}{x}\scp{a_{i}}{x}}{\scp{a_{i}}{v_{i}}}\right)_{i=1,\dots,m}.
  \]
\end{proof}

The previous lemma allows one to solve~\eqref{eq:best-prob-vector} by projected super-gradient ascent as follows: Choose a stepsize sequence $t_{k}$ and iterate:
\begin{enumerate}
\item Initialize with $p^{0}_{i}= 1/m$, $k=0$
\item Form $V^{T}DA + A^{T}DV - A^{T}SDA$ and compute an eigenvector $x$ corresponding to the minimal eigenvalue.
\item Compute the super-gradient $g^{k}_{i} =  \frac{\partial\lambda_{\min}}{\partial p}(p^{k})$ according to Lemma~\ref{lem:supergrad-smallest-ev}.
\item Update $p^{k+1} = \proj_{\Delta_{m}}(p^{k} + t_{k}g^{k})$ where $\proj_{\Delta_{m}}$ is the projection onto the $m$-dimensional simplex.
\end{enumerate}

It is worth noting, how this algorithm looks in the special case of $V=A$.
There we only want to maximize $\lambda_{\min}(A^{T}DA)$ and the super-gradient of this at some $p^{k}$ is just $g^{k}=\left(\tfrac{\scp{a_{i}}{x}^{2}}{\norm{a_{i}}^{2}}\right)_{i=1m\dots,m}$.
As this is always positive, we can project onto the simplex by a simple rescaling as
\[
p^{k+1} = \frac{p^{k}+ t_{k}g^{k}}{\norm[1]{p^{k}+t_{k}g^{k}}}.
\]

\subsection{Minimizing $\norm{I-V^TDA}$}
\label{sec:min_norm}

The subgradient of the objective functional in~\eqref{eq:best-prob-vector_norm} is given by the next lemma:
\begin{lem}\label{lem:subgrad-spectral-norm}
  Let $s_{i} = \scp{a_{i}}{v_{i}}$.
  The function $f(p) = \norm{I-V^{T}DA}$ with $D = \diag(p/s)$ is convex. A subgradient is given by
  \[
  -\frac{(Aq_{1})\odot(Vr_{1})}{s}\in \partial f(p)
  \]
  where $q_{1}$ and $r_{1}$ are left and right singular values of $I-V^{T}DA$ corresponding the largest singular value, $\odot$ denotes the componentswise product and the division is also to be understood componentwise.
\end{lem}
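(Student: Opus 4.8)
The plan is to treat $f(p) = \norm{I - V^T D A}$ as a composition of the spectral norm — which is convex and has a well-understood subdifferential — with an affine map of $p$. First I would establish convexity: the map $p \mapsto D = \diag(p/s)$ is linear, hence $p \mapsto I - V^T D A$ is affine, and precomposing the convex function $M \mapsto \norm{M}$ (spectral norm on $\RR^{n\times n}$) with an affine map yields a convex function of $p$.

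Next I would recall the classical formula for the subdifferential of the spectral norm at a matrix $M$: if $\sigma_1(M)$ is simple with left/right singular vectors $q_1, r_1$ (unit norm), then $\partial \norm{M} = \{q_1 r_1^T\}$, i.e. the spectral norm is differentiable there with gradient $q_1 r_1^T$ in the Frobenius inner product; in general $\partial\norm{M}$ is the convex hull of such rank-one matrices over the top singular subspace. Then I would apply the chain rule for convex functions: if $M(p) = I - V^T D(p) A$ and $G \in \partial\norm{\cdot}$ at $M(p)$, then the vector $\big(\scp{G}{\,\partial M/\partial p_i}\big)_i$ lies in $\partial f(p)$, where $\partial M/\partial p_i = -\tfrac{1}{s_i} v_i a_i^T$ (since $D = \diag(p/s)$ means $\partial D/\partial p_i = \tfrac{1}{s_i} e_i e_i^T$, and $V^T e_i e_i^T A = v_i a_i^T$).

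Taking $G = q_1 r_1^T$ and computing the Frobenius inner product componentwise:
\[
  \Big(\partial f(p)\Big)_i = \scp{q_1 r_1^T}{-\tfrac{1}{s_i} v_i a_i^T}_F = -\tfrac{1}{s_i}\,(q_1^T v_i)(r_1^T a_i) = -\tfrac{1}{s_i}\,(Aq_1)_i \cdot (Vr_1)_i,
\]
which assembled over $i$ is exactly $-\,(Aq_1)\odot(Vr_1)/s$. This establishes the claimed formula.

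The main obstacle is the non-smooth case where the largest singular value is not simple: then the naive "gradient" $q_1 r_1^T$ is only one element of a larger subdifferential, and one must be careful that the stated formula gives \emph{a} subgradient rather than \emph{the} gradient. I would handle this by invoking the convex chain rule in the form $\partial f(p) \supseteq \{ (\scp{G}{\partial M/\partial p_i})_i : G \in \partial\norm{M(p)} \}$ (which is in fact equality here since the inner affine map is surjective onto a suitable subspace, or simply cite the standard composition rule for a convex function with an affine map, which always gives equality), and then note that picking the particular $G = q_1 r_1^T$ — valid whenever $q_1, r_1$ are \emph{any} pair of top singular vectors — produces the stated element. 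A secondary minor point is orientation/sign conventions for the singular vectors ($q_1, r_1$ are determined only up to a common sign, which leaves $q_1 r_1^T$ and hence the formula unchanged), and ensuring $s_i = \scp{a_i}{v_i} \neq 0$ so the division is well-defined, which is already part of the standing assumptions.
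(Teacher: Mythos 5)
Your proposal is correct and takes essentially the same route as the paper: convexity via precomposition of the spectral norm with the affine map $p\mapsto I-V^{T}DA$, Watson's description of $\partial\norm{\cdot}$ at the top singular pair, and the convex chain rule, where your componentwise Frobenius pairing $\scp{q_{1}r_{1}^{T}}{-\tfrac{1}{s_{i}}v_{i}a_{i}^{T}}$ is exactly the paper's computation of $M^{T}(q_{1}r_{1}^{T})$ (and you handle the non-simple singular value case at least as carefully). The one caveat---shared with the paper's own proof, where a transpose is dropped in the trace manipulation---is the final identification: since $(q_{1}^{T}v_{i})(a_{i}^{T}r_{1})=(Vq_{1})_{i}(Ar_{1})_{i}$, under the standard left/right singular vector convention the subgradient is $-\frac{(Vq_{1})\odot(Ar_{1})}{s}$, whereas your last equality (and the lemma's stated formula) swaps the roles of $q_{1}$ and $r_{1}$.
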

\begin{proof}
  The convexity of $f$ follows from the convexity of the norm and the
  fact that the map $M: \RR^{m}\to\RR^{n\times n}$, $p\mapsto -V^{T}DA$ is linear in $p$.
  
  Example 1 in \cite{watson1992characterization} shows that the
  subgradient of the spectral norm is given as follows: If $B$ has a singular value decomposition $B = Q\Sigma R^{T}$ and the maximal singular value has multiplicity $j$, then 
  \[
  \partial_{A}\norm{A} = \conv\{q_{i}r_{i}^{T}\mid i=1,\dots,j\}
  \]
  where $q_{i}$ and $r_{i}$ are the $i$ columns of $Q$ and $R$, respectively.
  
  By the chain rule for subgradients, we get that
  \[
  \partial_{p}f(p) = M^{T}\partial\norm{I-Mp}.
  \]
  The transpose of $M$ is calculated by
  \begin{align*}
    \scp{p}{M^{T}B} & = \scp{Mp}{B}\\
    & = \trace((Mp)^{T}B)\\
    & = -\trace(V^{T}\diag(p/s)AB)\\
    & = -\trace(\diag(p/s)ABV^{T})\\
    & = \scp{p}{-\diag(\diag(1/s)ABV^{T})},
  \end{align*}
  i.e.
  \[
  M^{T}B = -\diag\Big(\diag(1/s)ABV^{T}\Big).
  \]
  Plugging in the previous formula we obtain that 
  \[
  -\diag(\diag(1/s)Aq_{1}r_{1}^{T}V^{T}) = -\diag((Vr_{1})^{T}\diag(1/s)(Aq_{1})) = \frac{(Aq_{1})\odot(Vr_{1})}{s} 
  \]
  is a subgradient of $f$,
  which shows the assertion.
\end{proof}

Similar to the previous subsection we can solve~\eqref{eq:best-prob-vector_norm} by projected subgradient descent as follows: Choose a stepsize sequence $t_{k}$ and iterate:
\begin{enumerate}
\item Initialize with $p^{0}_{i}= 1/m$, $k=0$
\item Compute a pair $q,r$ of left and right singular vectors of $I-V^{T}DA$.
\item Compute a subgradient $g^{k}$ according to Lemma~\ref{lem:subgrad-spectral-norm}
\item Update $p^{k+1} = \proj_{\Delta_{m}}(p^{k} + t_{k}g^{k})$ where $\proj_{\Delta_{m}}$ is the projection onto the $m$-dimensional simplex.
\end{enumerate}

\section{Numerical experiments}
\label{sec:numerics}

In this section we report a few numerical experiments that illustrate the results.\footnote{The code to produce the figures in this article is available at \url{https://github.com/dirloren/rkma}.}
We start with an illustration of Theorem~\ref{thm:RKMA}, i.e. the consistent and overdetermined case.
We used a Gaussian matrix $A\in\RR^{500\times 200}$ (i.e. the entries and independently and normally distributed) and defined the mismatched adjoint $V$ by setting all entries of $A$ with magnitude smaller that $0.5$ to zero.
The unique solution $\hat x$ was also generated as a Gaussian vector and as probabilties we used $p_{i} = \norm{a_{i}}^{2}/\norm[F]{A}^{2}$. The convergence condition~\eqref{eq:CC} is fulfilled and $1-\lambda\approx 1-5.5\cdot 10^{-4}$ and we also have $\rho(I-V^{T}DA) \approx 1 - 7.5\cdot 10^{-4}$.
Figure~\ref{fig:convergence_perturbation} shows the error and the residuals for the randomized Kaczmarz method with and without mismatched adjoint.

\begin{figure}[htb]
  \centering
  \setlength\figurewidth{0.35\textwidth}
%
%
\definecolor{mycolor1}{rgb}{0.00000,0.44700,0.74100}%
\definecolor{mycolor2}{rgb}{0.85000,0.32500,0.09800}%
\definecolor{mycolor3}{rgb}{0.92900,0.69400,0.12500}%
\begin{tikzpicture}

\begin{axis}[%
width=0.951\figurewidth,
height=0.75\figurewidth,
at={(0\figurewidth,0\figurewidth)},
scale only axis,
xmin=0,
xmax=20000,
xlabel style={font=\color{white!15!black}},
xlabel={$k$},
ymode=log,
ymin=1e-08,
ymax=100,
yminorticks=true,
ylabel style={font=\color{white!15!black}},
ylabel={$\|x_k-\hat x\|$},
axis background/.style={fill=white},
legend style={legend cell align=left, align=left, draw=white!15!black},
legend style={font=\scriptsize},
]
\addplot [color=mycolor1]
  table[row sep=crcr]{%
0	14.1672052673367\\
500	5.88312641165113\\
1000	2.91376439783347\\
1500	1.5001905920962\\
2000	0.902082033010813\\
2500	0.538471306589639\\
3000	0.33778803376405\\
3500	0.202825877566636\\
4000	0.131407298650567\\
4500	0.0845190739374194\\
5000	0.0544835611739526\\
5500	0.0343703128023273\\
6000	0.0220052600216057\\
6500	0.0138720785960084\\
7000	0.00907390153526532\\
7500	0.00597108039444771\\
8000	0.00399893854095585\\
8500	0.00249634008292328\\
9000	0.00169841023818331\\
9500	0.00120776504307857\\
10000	0.000852017211629835\\
10500	0.000572829247754201\\
11000	0.000371163944585592\\
11500	0.00022243465564903\\
12000	0.000140846848454149\\
12500	9.44410034841882e-05\\
13000	6.08075160406215e-05\\
13500	3.92210501271866e-05\\
14000	2.43321477499175e-05\\
14500	1.58509739877588e-05\\
15000	1.01625597412349e-05\\
15500	6.84690943367485e-06\\
16000	4.65983392542749e-06\\
16500	3.06786460475631e-06\\
17000	2.00370935954397e-06\\
17500	1.3218276799006e-06\\
18000	8.509970428052e-07\\
18500	5.83955179399466e-07\\
19000	3.54339873668716e-07\\
19500	2.43469470250286e-07\\
20000	1.57377851359516e-07\\
};
\addlegendentry{RK}

\addplot [color=mycolor2]
  table[row sep=crcr]{%
0	14.1672052673367\\
500	6.049107604801\\
1000	2.98499182314685\\
1500	1.43917926277164\\
2000	0.845049083137031\\
2500	0.505868079321398\\
3000	0.290928042005495\\
3500	0.167715909244177\\
4000	0.111210340215367\\
4500	0.0721968526473898\\
5000	0.0502888634001223\\
5500	0.0339181623612032\\
6000	0.0215186433814277\\
6500	0.0138875992301323\\
7000	0.0100037735231531\\
7500	0.00671552740558127\\
8000	0.00453742966021709\\
8500	0.00298086386350287\\
9000	0.0021670379471908\\
9500	0.00135994761926167\\
10000	0.000941052590793615\\
10500	0.000619492876189757\\
11000	0.000431157692401353\\
11500	0.000279087396221911\\
12000	0.000194174152034458\\
12500	0.000134510855794867\\
13000	9.25980996235986e-05\\
13500	5.70677312523665e-05\\
14000	3.54594268621814e-05\\
14500	2.44117480717027e-05\\
15000	1.68638381596283e-05\\
15500	1.16827930260993e-05\\
16000	7.6422178457208e-06\\
16500	4.65535777820914e-06\\
17000	3.12751555633827e-06\\
17500	2.11638979249376e-06\\
18000	1.3859764035559e-06\\
18500	8.58999030820027e-07\\
19000	5.92056060027892e-07\\
19500	3.5957497005796e-07\\
20000	2.45380518391142e-07\\
};
\addlegendentry{RKMA}

\addplot [color=mycolor3]
  table[row sep=crcr]{%
0	14.1672052673367\\
500	9.81214782381784\\
1000	6.79585303520859\\
1500	4.70677972910768\\
2000	3.25989619015635\\
2500	2.25779063015778\\
3000	1.56373646038826\\
3500	1.08303741050458\\
4000	0.75010723498846\\
4500	0.519521171221491\\
5000	0.359818216326766\\
5500	0.249208609720696\\
6000	0.172600853266756\\
6500	0.119542637719463\\
7000	0.0827947368883563\\
7500	0.0573432926291876\\
8000	0.0397157275104413\\
8500	0.0275069487530746\\
9000	0.0190511990370906\\
9500	0.0131947817262092\\
10000	0.00913865129766094\\
10500	0.00632939212433893\\
11000	0.00438371082983519\\
11500	0.00303614000556514\\
12000	0.0021028180213565\\
12500	0.0014564030719389\\
13000	0.00100869874920739\\
13500	0.000698620585369955\\
14000	0.000483861730458348\\
14500	0.000335120635012734\\
15000	0.000232103166962499\\
15500	0.000160753694298696\\
16000	0.000111337344375201\\
16500	7.71117846255473e-05\\
17000	5.340730337611e-05\\
17500	3.69896776187808e-05\\
18000	2.56188978631969e-05\\
18500	1.77435427929135e-05\\
19000	1.22891044152306e-05\\
19500	8.51138293468401e-06\\
20000	5.89494864825518e-06\\
};
\addlegendentry{expected rate}

\end{axis}
\end{tikzpicture}%
%
%
\definecolor{mycolor1}{rgb}{0.00000,0.44700,0.74100}%
\definecolor{mycolor2}{rgb}{0.85000,0.32500,0.09800}%
\begin{tikzpicture}

\begin{axis}[%
width=0.951\figurewidth,
height=0.75\figurewidth,
at={(0\figurewidth,0\figurewidth)},
scale only axis,
xmin=0,
xmax=20000,
xlabel style={font=\color{white!15!black}},
xlabel={$k$},
ymode=log,
ymin=1e-06,
ymax=10000,
yminorticks=true,
ylabel style={font=\color{white!15!black}},
ylabel={$\|Ax_k-b\|$},
axis background/.style={fill=white},
legend style={legend cell align=left, align=left, draw=white!15!black},
legend style={font=\scriptsize},
]
\addplot [color=mycolor1]
  table[row sep=crcr]{%
0	299.156736759464\\
500	83.8065611148295\\
1000	38.0298312935831\\
1500	22.8288860537529\\
2000	13.3188474255175\\
2500	8.34502311204884\\
3000	5.4871921473515\\
3500	3.59484124933501\\
4000	2.32725034245147\\
4500	1.50362872597102\\
5000	0.934825875107607\\
5500	0.628296660751603\\
6000	0.404699435411643\\
6500	0.26561121019421\\
7000	0.182561629044839\\
7500	0.118422163244\\
8000	0.0776161278297844\\
8500	0.0565995284900799\\
9000	0.041080721702944\\
9500	0.0252215813725731\\
10000	0.0174179709216545\\
10500	0.0122234910495785\\
11000	0.00855251550715836\\
11500	0.00577915177025728\\
12000	0.00399384441757269\\
12500	0.00252221782779137\\
13000	0.00180684861438707\\
13500	0.00123592460938548\\
14000	0.000852995758842721\\
14500	0.000584785319683792\\
15000	0.000418049811207177\\
15500	0.000287589260670429\\
16000	0.000190685108761475\\
16500	0.000124292374011254\\
17000	8.37233053666293e-05\\
17500	5.84459828318591e-05\\
18000	3.91608326128044e-05\\
18500	2.58631705611371e-05\\
19000	1.75275256286332e-05\\
19500	1.22935090806211e-05\\
20000	8.46678614177235e-06\\
};
\addlegendentry{RK}

\addplot [color=mycolor2]
  table[row sep=crcr]{%
0	299.156736759464\\
500	106.151816526849\\
1000	45.3651328696274\\
1500	23.0975444744677\\
2000	14.4595625782819\\
2500	8.19229508899233\\
3000	5.44645861624507\\
3500	3.38824669608456\\
4000	2.16043649661779\\
4500	1.55394399819855\\
5000	0.948005785629732\\
5500	0.600260847056577\\
6000	0.400802802518352\\
6500	0.247330806236085\\
7000	0.155697976643773\\
7500	0.107155628862616\\
8000	0.0720626787892393\\
8500	0.0501579233487291\\
9000	0.0319330442782188\\
9500	0.0192642011325274\\
10000	0.0139329323560009\\
10500	0.0083530791248863\\
11000	0.00498119223536771\\
11500	0.00335717672853031\\
12000	0.0020683454178345\\
12500	0.00134249047485722\\
13000	0.000855134855468356\\
13500	0.000572384671517491\\
14000	0.000405418303204034\\
14500	0.000260209974320502\\
15000	0.000156205538101069\\
15500	0.00011657492840931\\
16000	7.65582179662608e-05\\
16500	5.30296203284895e-05\\
17000	3.44231815355275e-05\\
17500	2.08393474624265e-05\\
18000	1.3463202906988e-05\\
18500	8.64643146594766e-06\\
19000	5.99446055640065e-06\\
19500	4.46625920236769e-06\\
20000	2.90606140872311e-06\\
};
\addlegendentry{RKMA}

\end{axis}
\end{tikzpicture}%
  \caption{Comparison of the randomized Kaczmarz method with and without mismatched adjoint in the overdetermined and consistent case. Left: Decay of the error and also the upper bound from Theorem~\ref{thm:RKMA}. Right: Decay of the residual.}
  \label{fig:convergence_perturbation}
\end{figure}
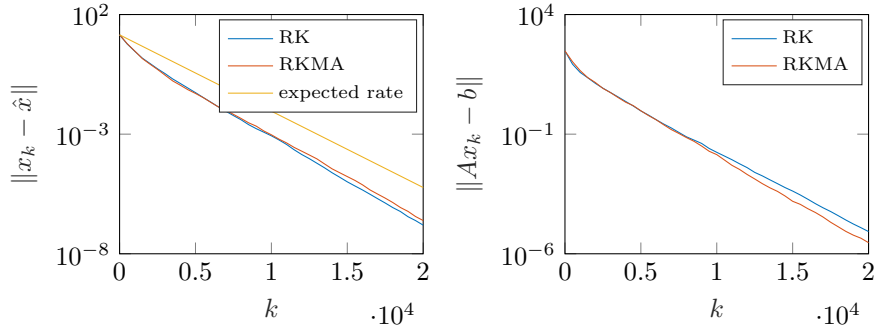

As expected, both methods converge, but in this example the method with mismatch converges slightly faster.
We note that this is not universal: other random instances constructed in the same way show different behaviour, although both methods are always quite close to each other.
Another observation is that the upper bound derived from the convergence factor $1-\lambda$ is quite far from the actual behavior.

Our second numerical example treats the inconsistent and overdetermined case.
The matrix $A$ and solution $\hat x$ and the probabilities are similar to the previous example, but now the right hand side is $b = A\hat x + r$ (with Gaussian $r$).
Figure~\ref{fig:inconsistent} shows the result of the RKMA method on this example and also the error bound from Theorem~\ref{thm:inconsitent}.
As predicted, the error does not go to zero, but levels out at a non-zero level (the same is true for the residual).
As in the previous example one sees that the upper bound from Theorem~\ref{thm:inconsitent} is quite loose.

\begin{figure}[htb]
  \centering
  \setlength\figurewidth{0.7\textwidth}
%
%
\definecolor{mycolor1}{rgb}{0.00000,0.44700,0.74100}%
\definecolor{mycolor2}{rgb}{0.85000,0.32500,0.09800}%
\definecolor{mycolor3}{rgb}{0.92900,0.69400,0.12500}%
\begin{tikzpicture}

\begin{axis}[%
width=0.956\figurewidth,
height=0.709\figurewidth,
at={(0\figurewidth,0\figurewidth)},
scale only axis,
xmin=0,
xmax=40000,
xlabel style={font=\color{white!15!black}},
xlabel={$k$},
ymode=log,
ymin=0.001,
ymax=100,
yminorticks=true,
ylabel style={font=\color{white!15!black}},
ylabel={$\|x_k-\hat x\|$},
axis background/.style={fill=white},
legend style={legend cell align=left, align=left, draw=white!15!black},
legend style={font=\scriptsize},
]
\addplot [color=mycolor1]
  table[row sep=crcr]{%
0	13.3422849367521\\
500	12.1584180288107\\
1000	11.0796203814749\\
1500	10.0965693608885\\
2000	9.20076974555281\\
2500	8.38448031321176\\
3000	7.64064694354613\\
3500	6.96284165856482\\
4000	6.34520707389181\\
4500	5.78240578090405\\
5000	5.26957422228322\\
5500	4.80228066237071\\
6000	4.37648688909558\\
6500	3.98851331648514\\
7000	3.63500718614463\\
7500	3.31291359286022\\
8000	3.01944908386808\\
8500	2.75207760355067\\
9000	2.50848857556172\\
9500	2.28657693281389\\
10000	2.08442492254731\\
10500	1.90028552897233\\
11000	1.7325673698753\\
11500	1.57982093620605\\
12000	1.44072605513241\\
12500	1.31408046744276\\
13000	1.19878941958138\\
13500	1.09385617908535\\
14000	0.998373389817121\\
14500	0.911515190208851\\
15000	0.832530023803452\\
15500	0.760734076742405\\
16000	0.695505281565785\\
16500	0.636277830823155\\
17000	0.582537147637106\\
17500	0.533815263646668\\
18000	0.489686557876678\\
18500	0.449763813299824\\
19000	0.413694551540138\\
19500	0.381157610758738\\
20000	0.351859937768631\\
20500	0.325533573319703\\
21000	0.301932819568552\\
21500	0.280831590882706\\
22000	0.262020962536087\\
22500	0.24530694485978\\
23000	0.230508520447739\\
23500	0.217455985955719\\
24000	0.205989634991119\\
24500	0.195958803067929\\
25000	0.187221270562106\\
25500	0.179642988964306\\
26000	0.173098065742432\\
26500	0.16746892067197\\
27000	0.16264651684154\\
27500	0.158530574606244\\
28000	0.155029694599665\\
28500	0.15206134166876\\
29000	0.149551669316542\\
29500	0.14743518873177\\
30000	0.145654304482364\\
30500	0.144158749369618\\
31000	0.14290495445723\\
31500	0.141855388581862\\
32000	0.140977896687402\\
32500	0.140245059863074\\
33000	0.139633593317971\\
33500	0.139123792515066\\
34000	0.138699032725511\\
34500	0.13834532346097\\
35000	0.138050916527265\\
35500	0.137805964653942\\
36000	0.137602226598784\\
36500	0.137432814117681\\
37000	0.137291976067738\\
37500	0.137174915045405\\
38000	0.137077632253054\\
38500	0.136996796664958\\
39000	0.136929634977068\\
39500	0.13687383924116\\
40000	0.136827489482099\\
};
\addlegendentry{upper bound}

\addplot [color=mycolor2]
  table[row sep=crcr]{%
0	13.3415856509871\\
500	4.80752268142071\\
1000	2.56995806271741\\
1500	1.55133112104778\\
2000	0.911725173104285\\
2500	0.524274397124014\\
3000	0.316687260483066\\
3500	0.195750032426726\\
4000	0.127336837057474\\
4500	0.0820874390499173\\
5000	0.0548592160815942\\
5500	0.0370405028825112\\
6000	0.0263716779438311\\
6500	0.0193055940453536\\
7000	0.0159688642317189\\
7500	0.0140322948351573\\
8000	0.0125223378312629\\
8500	0.0125196853276445\\
9000	0.0127274533527299\\
9500	0.0126390207662409\\
10000	0.012052296580614\\
10500	0.0118409337869874\\
11000	0.0121717617695793\\
11500	0.0118103735750914\\
12000	0.0121439510955108\\
12500	0.0116337607114076\\
13000	0.0119216456300045\\
13500	0.0123655639437584\\
14000	0.0115692186061228\\
14500	0.0115764551631339\\
15000	0.01245738342335\\
15500	0.0125309885135332\\
16000	0.0127898907454865\\
16500	0.0123630090780683\\
17000	0.012463349255891\\
17500	0.0126561069804863\\
18000	0.0121840454512158\\
18500	0.0116277176375999\\
19000	0.011246184215981\\
19500	0.0119092962913164\\
20000	0.0118012855776049\\
20500	0.0126263259367474\\
21000	0.0115850719894962\\
21500	0.0113949905760962\\
22000	0.0123770396404581\\
22500	0.0126643877318743\\
23000	0.0113991971905687\\
23500	0.012256374052498\\
24000	0.0113051845350752\\
24500	0.0121748055586696\\
25000	0.0113273777304379\\
25500	0.0115004312634764\\
26000	0.0119771205668975\\
26500	0.0122139234968527\\
27000	0.0125943990461265\\
27500	0.0122920968227781\\
28000	0.0120110355738388\\
28500	0.012438439588075\\
29000	0.0126592314028726\\
29500	0.0120907187394944\\
30000	0.0121589736045471\\
30500	0.0114985977200852\\
31000	0.0121904522133433\\
31500	0.011891661717136\\
32000	0.0119718851670719\\
32500	0.0117218206758311\\
33000	0.0112212910840294\\
33500	0.0116979848443391\\
34000	0.0114276915918759\\
34500	0.0124782047718303\\
35000	0.0121034087093206\\
35500	0.0117386990357442\\
36000	0.0116537254589141\\
36500	0.0115344029248946\\
37000	0.0108653204455565\\
37500	0.0118867351982588\\
38000	0.0121194893984968\\
38500	0.0120183703403075\\
39000	0.011972846367752\\
39500	0.0121464317154578\\
40000	0.0118667518068816\\
};
\addlegendentry{RKMA}

\addplot [color=mycolor3]
  table[row sep=crcr]{%
0	0.00916481947626831\\
500	0.00916481947626831\\
1000	0.00916481947626831\\
1500	0.00916481947626831\\
2000	0.00916481947626831\\
2500	0.00916481947626831\\
3000	0.00916481947626831\\
3500	0.00916481947626831\\
4000	0.00916481947626831\\
4500	0.00916481947626831\\
5000	0.00916481947626831\\
5500	0.00916481947626831\\
6000	0.00916481947626831\\
6500	0.00916481947626831\\
7000	0.00916481947626831\\
7500	0.00916481947626831\\
8000	0.00916481947626831\\
8500	0.00916481947626831\\
9000	0.00916481947626831\\
9500	0.00916481947626831\\
10000	0.00916481947626831\\
10500	0.00916481947626831\\
11000	0.00916481947626831\\
11500	0.00916481947626831\\
12000	0.00916481947626831\\
12500	0.00916481947626831\\
13000	0.00916481947626831\\
13500	0.00916481947626831\\
14000	0.00916481947626831\\
14500	0.00916481947626831\\
15000	0.00916481947626831\\
15500	0.00916481947626831\\
16000	0.00916481947626831\\
16500	0.00916481947626831\\
17000	0.00916481947626831\\
17500	0.00916481947626831\\
18000	0.00916481947626831\\
18500	0.00916481947626831\\
19000	0.00916481947626831\\
19500	0.00916481947626831\\
20000	0.00916481947626831\\
20500	0.00916481947626831\\
21000	0.00916481947626831\\
21500	0.00916481947626831\\
22000	0.00916481947626831\\
22500	0.00916481947626831\\
23000	0.00916481947626831\\
23500	0.00916481947626831\\
24000	0.00916481947626831\\
24500	0.00916481947626831\\
25000	0.00916481947626831\\
25500	0.00916481947626831\\
26000	0.00916481947626831\\
26500	0.00916481947626831\\
27000	0.00916481947626831\\
27500	0.00916481947626831\\
28000	0.00916481947626831\\
28500	0.00916481947626831\\
29000	0.00916481947626831\\
29500	0.00916481947626831\\
30000	0.00916481947626831\\
30500	0.00916481947626831\\
31000	0.00916481947626831\\
31500	0.00916481947626831\\
32000	0.00916481947626831\\
32500	0.00916481947626831\\
33000	0.00916481947626831\\
33500	0.00916481947626831\\
34000	0.00916481947626831\\
34500	0.00916481947626831\\
35000	0.00916481947626831\\
35500	0.00916481947626831\\
36000	0.00916481947626831\\
36500	0.00916481947626831\\
37000	0.00916481947626831\\
37500	0.00916481947626831\\
38000	0.00916481947626831\\
38500	0.00916481947626831\\
39000	0.00916481947626831\\
39500	0.00916481947626831\\
40000	0.00916481947626831\\
};
\addlegendentry{expected final error}

\end{axis}
\end{tikzpicture}%
  \caption{The RKMA method in the inconsistent. The plot shows the decay of the error and the theoretical upper bound and the expected final error from Theorem~\ref{thm:inconsitent}.}
  \label{fig:inconsistent}
\end{figure}
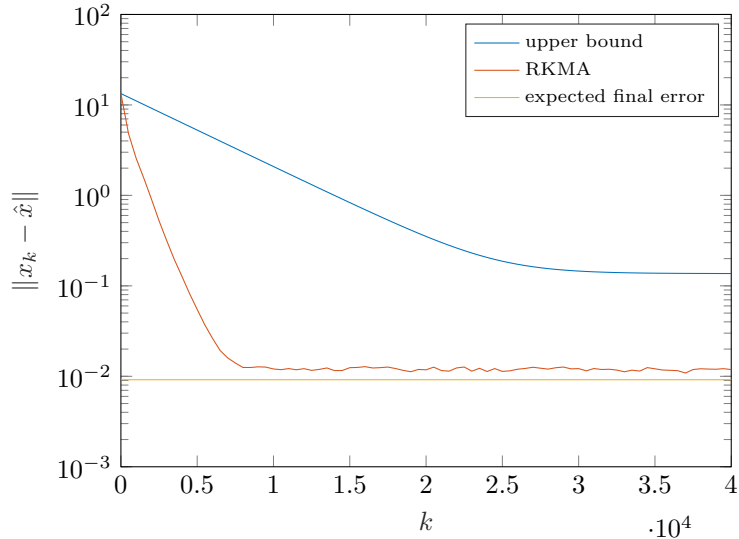

Now we illustrate the behavior of RKMA in the underdetermined case.
We used $A,V\in\RR^{100\times 500}$, again $A$ with Gaussian entries and we obtained $V$ from $A$ by setting the entries of $A$ to zero that have magnitude smaller than $0.3$.
The solution $\hat x$ was constructed as $\hat x = V^{T}c$ for some random vector $c$ and the right hand side was obtained through $b = A\hat x$.
Hence, generically $\hat x$ is not in the range of $A^{T}$ and the standard randomized Kaczmarz method can not converge to $\hat x$.
Figure~\ref{fig:underdetermined} shows that the error decays quickly to zero for RKMA but not for the standard randomized Kaczmarz method. The residuals, however, behave roughly similar for both methods.

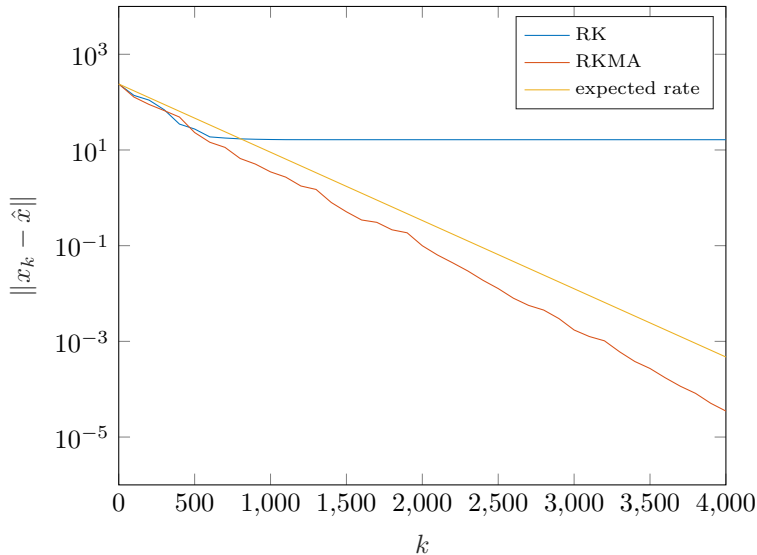
\begin{figure}[htb]
  \centering
  \setlength\figurewidth{0.7\textwidth}
%
%
\definecolor{mycolor1}{rgb}{0.00000,0.44700,0.74100}%
\definecolor{mycolor2}{rgb}{0.85000,0.32500,0.09800}%
\definecolor{mycolor3}{rgb}{0.92900,0.69400,0.12500}%
\begin{tikzpicture}

\begin{axis}[%
width=0.951\figurewidth,
height=0.75\figurewidth,
at={(0\figurewidth,0\figurewidth)},
scale only axis,
xmin=0,
xmax=4000,
xlabel style={font=\color{white!15!black}},
xlabel={$k$},
ymode=log,
ymin=1e-06,
ymax=10000,
yminorticks=true,
ylabel style={font=\color{white!15!black}},
ylabel={$\|x_k-\hat x\|$},
axis background/.style={fill=white},
legend style={legend cell align=left, align=left, draw=white!15!black},
legend style={font=\scriptsize},
]
\addplot [color=mycolor1]
  table[row sep=crcr]{%
0	239.395406768872\\
100	138.357260449599\\
200	110.097240085806\\
300	69.1415599332009\\
400	34.7551955308525\\
500	27.4098855532079\\
600	18.7749136428352\\
700	17.7667544174548\\
800	17.0717815228075\\
900	16.7540983102695\\
1000	16.5527009170385\\
1100	16.4309295540297\\
1200	16.4118176719995\\
1300	16.4046236933987\\
1400	16.3978908603884\\
1500	16.3964233913836\\
1600	16.393741224474\\
1700	16.3932779432323\\
1800	16.3928572639905\\
1900	16.3927757002131\\
2000	16.3927168762873\\
2100	16.3926964332773\\
2200	16.3926895154766\\
2300	16.3926865931491\\
2400	16.3926811300613\\
2500	16.3926798050417\\
2600	16.3926794912642\\
2700	16.3926792023204\\
2800	16.3926789954122\\
2900	16.3926787981092\\
3000	16.3926787414989\\
3100	16.3926787125647\\
3200	16.392678598935\\
3300	16.3926785876061\\
3400	16.3926785801474\\
3500	16.3926785785591\\
3600	16.3926785774149\\
3700	16.3926785771916\\
3800	16.3926785768079\\
3900	16.3926785766695\\
4000	16.3926785759492\\
};
\addlegendentry{RK}

\addplot [color=mycolor2]
  table[row sep=crcr]{%
0	239.395406768872\\
100	127.717229309416\\
200	89.3026121258488\\
300	66.4582498062623\\
400	48.7750431241462\\
500	23.1468313915923\\
600	14.5019057042719\\
700	11.3285051598363\\
800	6.63658737325054\\
900	5.07615625670248\\
1000	3.48523849393842\\
1100	2.70898852884228\\
1200	1.76651290660488\\
1300	1.49671137279737\\
1400	0.79954010186503\\
1500	0.510076228218172\\
1600	0.344148973820167\\
1700	0.307222471415743\\
1800	0.214619023339446\\
1900	0.185762161457548\\
2000	0.0995815658191001\\
2100	0.0638272467397188\\
2200	0.0437955780670308\\
2300	0.0295635566410737\\
2400	0.0188585216056598\\
2500	0.0126829473369861\\
2600	0.00798170986496806\\
2700	0.00564721256306467\\
2800	0.00450322453262679\\
2900	0.00299073915905901\\
3000	0.00172780750380965\\
3100	0.00126284169610775\\
3200	0.00102581586185441\\
3300	0.000603917052341952\\
3400	0.000379353749669092\\
3500	0.000271057253437689\\
3600	0.000173508034554104\\
3700	0.000115314007677437\\
3800	8.18489843737224e-05\\
3900	5.08260757697908e-05\\
4000	3.49311004056378e-05\\
};
\addlegendentry{RKMA}

\addplot [color=mycolor3]
  table[row sep=crcr]{%
0	239.395406768872\\
100	172.369600200995\\
200	124.109645521045\\
300	89.3614888785384\\
400	64.342103799137\\
500	46.3276336736728\\
600	33.356845907653\\
700	24.0176128300553\\
800	17.2931735707689\\
900	12.4514394608988\\
1000	8.96529165187434\\
1100	6.45519376740122\\
1200	4.64787183649334\\
1300	3.3465629982421\\
1400	2.40959395938353\\
1500	1.73495704462981\\
1600	1.24920463673502\\
1700	0.899452945691367\\
1800	0.647624558637054\\
1900	0.466302957768905\\
2000	0.335747688261908\\
2100	0.241745217985689\\
2200	0.174061512445501\\
2300	0.125327857019322\\
2400	0.0902386261292168\\
2500	0.0649736605999189\\
2600	0.0467823675164157\\
2700	0.0336842635959427\\
2800	0.0242533602773057\\
2900	0.0174629165653377\\
3000	0.0125736578965223\\
3100	0.00905329143085903\\
3200	0.00651855541216332\\
3300	0.00469349351956207\\
3400	0.00337941154524303\\
3500	0.00243324558658121\\
3600	0.00175198670104299\\
3700	0.00126146633844066\\
3800	0.000908281622270051\\
3900	0.000653981386751308\\
4000	0.000470880004318751\\
};
\addlegendentry{expected rate}

\end{axis}
\end{tikzpicture}%
  \caption{Comparison of the randomized Kaczmarz method with and without mismatched adjoint in the underdetermined and consistent case but with true solution $\hat x\in\rg V^{T}$ and $\hat x\notin \rg A^{T}$. Plot shows the decay of the error.}
  \label{fig:underdetermined}
\end{figure}

For another illustration of the underdetermined case, we generated a toy problem from computerized tomography with the AIRtools package \cite{hansen2012air} as follows:
For a $50\times 50$ pixel image we generated a CT projection matrix for a parallel beam geometry with 36 equi-spaced angels and 150 rays per angle, leading to a projection matrix of size $5,400\times 2,500$ (the MATLAB command is \texttt{Afull = paralleltomo(50,0:5:180,150,70)}).
For the matrix $A$ for the forward projection we used every third row of the matrix while for $V$ (the backprojection) we used the averaged of three consecutive rows, thereby employing a simple model for detector bin width in the backprojection operation.
Then we eliminated the rows of $A$ and $V$ which correspond to zero-rows in $A$ which leaves us with two matrices of size $1,636\times 2,500$.
Then we generated a smooth image by
\begin{quote}
  \texttt{im = phantomgallery('ppower',N,0.3,1.3,155432);\\
    im = imfilter(im,fspecial('gaussian',16,4));\\ im =
    im/max(max(im));\\ x = im(:);}
\end{quote}
and generated the data by \texttt{b = A*x}.
We reconstructed $x$ by RKMA and RK (with the probabilities from Remark~\ref{rem:strohmer-vershynin}).
Figure~\ref{fig:ct_rek} shows the reconstructions after a quite small number of sweeps (one sweep corresponds to $m$ step of the methods, where $m$ is is the number of rows).
One sees that using a mismatched adjoint is beneficial in this setting: First, the iteration converges to a limit which is closer to the original image (which is due to the fact that this is closer to the range of $V^{T}$ that to that of $A^{T}$). Moreover, the initial iterates are better.
As expected, the reconstruction with $A^{T}$ and $A$ suffers from  Moire patterns. 
Using $V^{T}$ as adjoint avoids these artifact as the range of $V^{T}$ contains smoother functions, in some sense.
Finally, we note that applying RK using $V$ for both the forward and back-projection does also converge, but leads to an even worse reconstruction than using RK with $A$.

\begin{figure}[htb]
  \newcommand{\imwidth}{0.24\textwidth}
  \setlength\figurewidth{0.2\textwidth}
  \subfigure[Original]{\includegraphics[width=\imwidth]{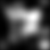}}
  \subfigure[Decay of error]{
%
%
\definecolor{mycolor1}{rgb}{0.00000,0.44700,0.74100}%
\definecolor{mycolor2}{rgb}{0.85000,0.32500,0.09800}%
\begin{tikzpicture}

\begin{axis}[%
width=0.951\figurewidth,
height=0.75\figurewidth,
at={(0\figurewidth,0\figurewidth)},
scale only axis,
xmin=0,
xmax=35000,
xlabel style={font=\color{white!15!black}},
xlabel={$k$},
ymode=log,
ymin=0.1,
ymax=100,
yminorticks=true,
ylabel style={font=\color{white!15!black}},
ylabel={$\|x_k-\hat x\|$},
axis background/.style={fill=white},
legend style={legend cell align=left, align=left, draw=white!15!black},
legend style={font=\scriptsize},
]
\addplot [color=mycolor1]
  table[row sep=crcr]{%
0	19.2426431948771\\
1636	6.91159148821129\\
3272	4.26940520332398\\
4908	3.49017829922446\\
6544	3.08127947970296\\
8180	2.82378092999374\\
9816	2.68742389244817\\
11452	2.58410192632922\\
13088	2.5000816046499\\
14724	2.44530021757056\\
16360	2.39684643470834\\
17996	2.36007102586635\\
19632	2.32804293391878\\
21268	2.30306850294023\\
22904	2.28097135682656\\
24540	2.26113816035802\\
26176	2.24698489751628\\
27812	2.2329243516769\\
29448	2.21832303574738\\
31084	2.20734056344153\\
32720	2.19449820904885\\
};
\addlegendentry{RK}

\addplot [color=mycolor2]
  table[row sep=crcr]{%
0	19.2426431948771\\
1636	5.19581559270871\\
3272	2.62780103173679\\
4908	1.6545411612526\\
6544	1.23644038139796\\
8180	1.04487069316001\\
9816	0.946712228256538\\
11452	0.885019764984337\\
13088	0.826132599762504\\
14724	0.780554801780955\\
16360	0.748933881399968\\
17996	0.717653176077198\\
19632	0.688101871557996\\
21268	0.666385573213715\\
22904	0.644609358224233\\
24540	0.625226271515553\\
26176	0.607742125283483\\
27812	0.594114023380855\\
29448	0.57798734124975\\
31084	0.568894012822039\\
32720	0.557593907514564\\
};
\addlegendentry{RKMA}

\end{axis}
\end{tikzpicture}
  \subfigure[RK, 1 sweep]{\includegraphics[width=\imwidth]{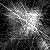}}
  \subfigure[RK, 3 sweeps]{\includegraphics[width=\imwidth]{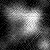}}
  \subfigure[RK, 10 sweeps]{\includegraphics[width=\imwidth]{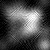}}
  \subfigure[RK, 20 sweeps]{\includegraphics[width=\imwidth]{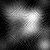}}\\
  \subfigure[RKMA, 1 sweep]{\includegraphics[width=\imwidth]{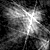}}
  \subfigure[RKMA, 3 sweeps]{\includegraphics[width=\imwidth]{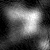}}
  \subfigure[RKMA, 10 sweeps]{\includegraphics[width=\imwidth]{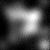}}
  \subfigure[RKMA, 20 sweeps]{\includegraphics[width=\imwidth]{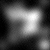}}\\
  \caption{Reconstruction for a toy CT example.}
  \label{fig:ct_rek}
\end{figure}

Finally, we illustrate that the optimization of the probabilities according to Section~\ref{sec:optim-probs} does indeed improve the practical performance.
We used $A,V \in\RR^{300\times 100}$ where $A$ is a random matix with Gaussian entries where the $i$th row has been scaled with the factor $2/(\sqrt{i}+2)$ and $V$ has been obtained from $A$ by setting 5\% randomly chosen entries of $A$ to zero.
We calculated optimized probabilities by the methods from Sections~\ref{sec:max_lambda_min} and~\ref{sec:min_norm}, respectively (initialized with uniform probabilities).
We applied RKMA with these optimized probabilities, uniform probabilities, and $p_{i}\propto \scp{a_{i}}{v_{i}}$. Figure~\ref{fig:optimize_lambda} shows that the optimized probabilites indeed outperform the uniform choice and the choice proportional to $\scp{a_{i}}{v_{i}}$. 
Table~\ref{tab:optimized-probabilities} shows the respective quantities for the different probabilities.
Although both approaches optimize different quantities and neither optimizes the asymptotic convergence rate, both probabilities are rather similar in practice, and, as shown in Figure~\ref{fig:optimize_lambda} on the right, the probabilities for the different optimization problems are quite similar.

\begin{table}[htb]
  \centering
  \begin{tabular}{lcccc}\toprule
                      & unif & row & $\max\lambda$ & $\min\norm{I-V^{T}DA}$\\\midrule
    $1-\lambda$        &0.998588 & 0.999079 & 0.997820 & 0.998311 \\
    $\rho(I-V^{T}DA)$  &0.997908 & 0.998352 & 0.997540 & 0.997327 \\
    $\norm{I-V^{T}DA}$ &0.998029 & 0.998485 & 0.997752 & 0.997439 \\\bottomrule
  \end{tabular}
  \caption{Quantities describing the convergence of RKMA for different probabilities.}
  \label{tab:optimized-probabilities}
\end{table}

\begin{figure}[htb]
  \centering
  \setlength\figurewidth{0.35\textwidth}
%
%
\definecolor{mycolor1}{rgb}{0.00000,0.44700,0.74100}%
\definecolor{mycolor2}{rgb}{0.85000,0.32500,0.09800}%
\definecolor{mycolor3}{rgb}{0.92900,0.69400,0.12500}%
\definecolor{mycolor4}{rgb}{0.49400,0.18400,0.55600}%
\begin{tikzpicture}

\begin{axis}[%
width=0.951\figurewidth,
height=1.048\figurewidth,
at={(0\figurewidth,0\figurewidth)},
scale only axis,
xmin=0,
xmax=8000,
xlabel style={font=\color{white!15!black}},
xlabel={$k$},
ymode=log,
ymin=1e-09,
ymax=10,
yminorticks=true,
ylabel style={font=\color{white!15!black}},
ylabel={$\|x_k-\hat x\|$},
axis background/.style={fill=white},
legend style={legend cell align=left, align=left, draw=white!15!black},
legend style={font=\scriptsize},
]
\addplot [color=mycolor1]
  table[row sep=crcr]{%
0	9.18776673343378\\
300	2.8998894821406\\
600	1.2526289642923\\
900	0.514427203660014\\
1200	0.258059250283638\\
1500	0.125495358598903\\
1800	0.0558572732098953\\
2100	0.0250409874824853\\
2400	0.0102519666861141\\
2700	0.0041665231410138\\
3000	0.00201627588953081\\
3300	0.0010336370104877\\
3600	0.000445848273651003\\
3900	0.000235482707703009\\
4200	0.000103425262911142\\
4500	4.72686259628947e-05\\
4800	1.9244895142441e-05\\
5100	7.51768146150301e-06\\
5400	3.26148014558699e-06\\
5700	1.49526316556784e-06\\
6000	6.32783202703965e-07\\
6300	2.82482701993938e-07\\
6600	1.55903785395132e-07\\
6900	6.94841384217729e-08\\
7200	3.40498214493985e-08\\
7500	1.69002256290857e-08\\
7800	7.85433152876205e-09\\
};
\addlegendentry{opt $\|I-V^TDA\|$}

\addplot [color=mycolor2]
  table[row sep=crcr]{%
0	9.18776673343378\\
300	2.77104945494776\\
600	1.13127345229608\\
900	0.492413764651897\\
1200	0.226464407599363\\
1500	0.11686742370087\\
1800	0.0379309007694225\\
2100	0.0183071566028428\\
2400	0.0085459775132953\\
2700	0.00433589168986686\\
3000	0.00200272029875946\\
3300	0.000929962272060166\\
3600	0.000455067557243009\\
3900	0.000183159775978073\\
4200	8.50521955857424e-05\\
4500	4.1578237420205e-05\\
4800	1.47667553183786e-05\\
5100	6.56984891909457e-06\\
5400	2.75713649706943e-06\\
5700	1.4312448449168e-06\\
6000	7.28518418867223e-07\\
6300	3.57626733345161e-07\\
6600	1.63859610665837e-07\\
6900	7.56262775014824e-08\\
7200	3.93808385727371e-08\\
7500	2.05029692345983e-08\\
7800	1.03516188079607e-08\\
};
\addlegendentry{opt $\lambda$}

\addplot [color=mycolor3]
  table[row sep=crcr]{%
0	9.18776673343378\\
300	2.72451652979289\\
600	1.0810585852168\\
900	0.423275694357549\\
1200	0.169603913146267\\
1500	0.0774757838541698\\
1800	0.034742060476637\\
2100	0.0162765849878355\\
2400	0.00906319614617254\\
2700	0.00492791229195241\\
3000	0.0027680096525542\\
3300	0.00152589522615802\\
3600	0.000826595594376192\\
3900	0.00045242682211757\\
4200	0.000215259316378454\\
4500	0.000130431400878885\\
4800	7.17522921111452e-05\\
5100	3.95188052838762e-05\\
5400	2.18430964390456e-05\\
5700	1.15208761914229e-05\\
6000	6.56304654689568e-06\\
6300	4.17119056585959e-06\\
6600	2.21432510406495e-06\\
6900	1.15820162845933e-06\\
7200	6.46969047775068e-07\\
7500	3.85473623348434e-07\\
7800	1.95150159417086e-07\\
};
\addlegendentry{unif}

\addplot [color=mycolor4]
  table[row sep=crcr]{%
0	9.18776673343378\\
300	2.49588032722787\\
600	1.41760569994378\\
900	0.732682657671624\\
1200	0.457731070062441\\
1500	0.282760626589907\\
1800	0.180383016855586\\
2100	0.105158412454391\\
2400	0.0597000358923658\\
2700	0.0360280684683189\\
3000	0.021664693320148\\
3300	0.0142115518411541\\
3600	0.0088183207495205\\
3900	0.00553242011799617\\
4200	0.00343133157928325\\
4500	0.00226062100548556\\
4800	0.00113259074228682\\
5100	0.000666299486613425\\
5400	0.000426142672075336\\
5700	0.00028581685396183\\
6000	0.000158491202816528\\
6300	0.000101789213646755\\
6600	6.12689855638568e-05\\
6900	3.89249459780143e-05\\
7200	2.25030813309067e-05\\
7500	1.33546155452884e-05\\
7800	8.55296719687011e-06\\
};
\addlegendentry{$\propto \langle a_i,v_i\rangle$}

\end{axis}
\end{tikzpicture}%
  \input{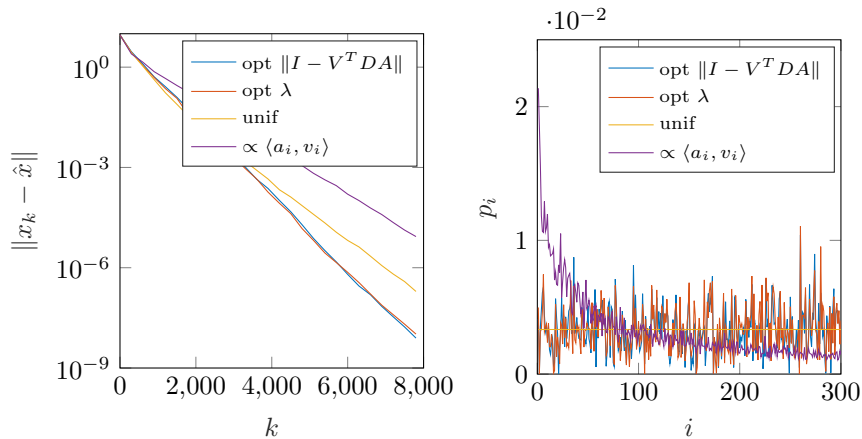}
  \caption{A sample run of RKMA on a consistent system with different
    probabilities: ``unif'' refers to the uniform probabilities
    $p_{i}= 1/m$, ``$\propto\scp{a_{i}}{v_{i}}$'' uses
    $p_{i} = \scp{a_{i}}{v_{i}}/\norm[V]{A}^{2}$
    (cf. Remark~\ref{rem:strohmer-vershynin}) and ``opt $\norm{I-V^{T}DA}$'' and ``opt $\lambda$'' refer to
    probabilities obtained by the methods from
    Section~\ref{sec:optim-probs}.}
  \label{fig:optimize_lambda}
\end{figure}


\section{Conclusion}
\label{sec:conclusion}

We derived several results on the convergence of the randomized Kaczmarz method with mismatched adjoint and could show that the method converges linearly when the mismatch is not too large.
The results are a little bit more complicated compared to the case of no mismatch due to the asymmetry of the matrix $I-V^{T}DA$. In particular, estimates for the norm of the expected error and the expectation of the norm of the error are different in this case.
We were also able to characterize the asymptotic convergence rate of RKMA and numerical experiments indicate that this estimate of the rate is indeed quite sharp.
In the underdetermined case one may even take advantage of the use of a mismatched adjoint to drive the randomized Kaczmarz method to a solution in the subspace $\rg V^{T}$.
This last point may be important for algebraic reconstruction techniques in computerized tomography where mismatched projector pairs are often employed.
Using the conditions derived here, a thorough study of commonly used mismatched projector pairs could be performed to determine what pairs have guaranteed asymptotic convergence properties.

\section*{Acknowledgements}

The authors thank Emil Sidky for valuable discussions. This material was based upon work partially supported by the National Science Foundation under Grant DMS-1127914 to the Statistical and Applied Mathematical Sciences Institute.  Any opinions, findings, and conclusions or recommendations expressed in this material
are  those  of  the  author(s)  and  do  not  necessarily  reflect  the  views  of  the  National  Science
Foundation.

\bibliographystyle{plain}
\bibliography{./literature}

\end{document}